\documentclass[leqno]{amsart} 
\usepackage{a4}
\usepackage{amsmath,amsthm}
\usepackage{amssymb,amscd,latexsym}
\usepackage{boxedminipage}
\usepackage{eepic}
\usepackage{epic}
\usepackage{wrapfig}
\usepackage{graphicx}

\providecommand{\bysame}{\makebox[3em]{\hrulefill}\thinspace}

\theoremstyle{plain}

\newtheorem*{def-theo}{Definition-Theorem}

\theoremstyle{definition}

\newtheorem*{definition}{Definition}



\theoremstyle{remark}
\newtheorem{remark}{Remark}

\newtheorem{thm}{Theorem}[section]

\newtheorem{lem}[thm]{Lemma}

\theoremstyle{definition}


\newtheorem{rem}[thm]{Remark}
\numberwithin{equation}{section}
\newcommand{\bp}{\begin{pmatrix}}
\newcommand{\ep}{\end{pmatrix}}
\newcommand{\bps}{\begin{smallmatrix}}
\newcommand{\eps}{\end{smallmatrix}}
\def\C{{\mathbb C}}

\def\R{{\mathbb R}}
\def\Z{{\mathbb Z}}

\def\E{{\mathcal E}}

\def\H{{\mathcal H}}
\def\I{{\mathcal I}}

\def\K{{\mathcal K}}

\def\O{{\mathcal O}}

\def\U{{\mathcal U}}

\def \0{{\bf 0}}
\def \1{{\bf 1}}

\def \Ker{\mathrm{Ker}}

\def \rank{\mathrm{rank}}

\def \mf#1#2#3#4{
\xymatrix{{#1}\  \ar@<0.4ex>[r]^{{#2}} & \ {#4}
\ar@<0.4ex>[l]^{{#3}}
}
}

\def \mfs#1#2#3#4{\!
\xymatrix@C=1.5em{{#1} \! \ar@<0.2ex>[r]^{{#2}} & \! {#4}
\ar@<0.2ex>[l]^{{#3}}
}
\!}

\def \mfl#1#2#3#4{
\xymatrix@C=2.6em{{#1}\  \ar@<0.4ex>[r]^{{#2}} &\  {#4}
\ar@<0.2ex>[l]^{{#3}}
}
}

\def \mfss#1#2#3#4{\!
\xymatrix@C=1.5em{{#1} \ar@<0.3ex>[r]^{{#2}} & {#4}
\ar@<0.3ex>[l]^{{#3}}
}
\!}

\begin{document}

{
\title[coherendce of direct images of the De Rham complex]{\!\!\! coherence of direct im\!a\!ge\!s of the D\!e\! Rham comple\!x\!\!\! 
}

\author{Kyoji Saito}

\bigskip
\address{Kavli IPMU (WPI), UTIAS, The University of Tokyo, Kashiwa, Chiba 277-8583, Japan}

\thanks{This work was supported by World Premier International Research Center Initiative (WPI), MEXT, Japan, and partially by JSPS Grant-in-Aid for Scientific Research (A) No. 25247004. }

\date{}
\maketitle

 {\renewcommand{\baselinestretch}{0.1}

\centerline{Dedicated to the memory of Egbert Brieskorn (7.7.1937-19.7.2013)}

\begin{abstract}\!
We\! show\! the\! coherence\! of\! the\! direct\! images\! of\! the\! De\! Rham complex  relative to a flat holomorphic map with suitable boundary conditions. 
For this 
purpose, a notion of bi-dg-algebra called the Koszul-De Rham algebra is developed.
\end{abstract}

\tableofcontents

\vspace{-1.0cm}
\section{Introduction} 
In the present paper, we prove the following theorem.

\smallskip
\noindent
{\bf Main Theorem.}
{\it 
Let $\Phi: Z\to S$ be a flat holomorphic map between complex manifolds.
\footnote{We assume that a manifold is connected, paracompact, Hausdorff and, hence, metrizable.}
Assume that there exists an open subset $Z'\subset Z$ with smooth boundary 
satisfying i) $Z'$ contains the critical set $C_{\Phi}$  of $\Phi$, ii) the closure $\bar{Z'}$ in $Z$ is proper over $S$, and iii) $Z'$ is a weak deformation retract of $Z$ along the fibers of $\Phi$ and iv) $\partial Z'$ is transversal to all fibers $\Phi^{-1}(t)$.  Then, the direct  images ${\R}^k\Phi_*(\Omega^\bullet_{Z/S}, d_{Z/S})$ 
of the relative De Rham complex $\Omega^\bullet_{Z/S}${\small$:=\Omega^\bullet_Z/\Phi^{*}(\Omega_S^1)\wedge \Omega^{\bullet-1}_Z$} on $Z$ over $S$ are  $\O_S$-coherent modules.
}

\smallskip
The main Theorem is well-known for a proper and/or projective morphism $\Phi$, 
since 1)  the $'E_1$-term $\mathrm{R}^q\Phi_*(\Omega_{Z/{S}}^p)$ of the spectral sequence defining the direct image, the so called {\it Hodge to De Rham spectral sequence} \eqref{spectral}, is already $\O_S$-coherent due to the proper mapping theorem of Grauert and/or Grothendieck,  and 2) the differentials on the spectral sequence  (induced from the relative De Rham differential $d_{Z/S}$)  are $\O_S$-homomorphisms  so that the limit of the spectral sequence is also $\O_S$-coherent (see \cite{Katz-Oda}\cite{Katz}). 

Therefore, our main interest is the study of the case when $\Phi$ is a non-proper morphism between open manifolds. 
We give a direct and down to the earth proof of the Main Theorem by introducing the notion of a {\it Koszul-De Rham algebra}, which seems to detect information of the singularities of the morphism $\Phi$ and to be of interest by itself (see {\bf Step} 3).
In such a non-proper mapping setting,  we also remark that the result has a close connection to a general theorem for  coherent $\mathcal{D}_Z$-modules which  are non-characteristic on the boundary by Houzel-Schapira \cite{Shapira1}, and its generalization to elliptic systems by Schapira-Schneiders (Theorem 4.2 in \cite{Schapira2}), since {\it the relative de-Rham system is an elliptic system}. 

If the range $S$ of $\Phi$  is one-dimensional, i.e.\ $\Phi$ is a function, and $Z$ is a suitably small neighborhood of an isolated critical point of $\Phi$, then the main Theorem was shown by Brieskorn \cite{Brieskorn1} and then by Greuel \cite{Greuel} (see Hamm \cite{Hamm} for what happens if non-isolated singularities are admitted). Namely, in the case of an isolated critical point, $\Phi$ is locally analytically equivalent to a polynomial map, and one proves the coherence by extending $\Phi$ to a projective morphism and then applying Grothendieck's coherence theorem for projective morphisms. The result was generalized by  the author in \cite{Saito2,Saito3} to the complete intersection case for higher dimensional base space $S$, where he did not use the above mentioned algebro geometric method in \cite{Brieskorn1} 
but used a complex analytic method developed by Forster and Knorr \cite{Forster-Knorr} who  gave  a new proof of the Grauert proper mapping theorem \cite{Grauert}. Recently, jointly with Changzheng Li and Si Li, the author studied in \cite{Li-Li-Saito} morphisms $\Phi$ which may no longer be defined locally in a neighborhood of an isolated critical point  but may have multiple critical points as in the Main Theorem. Then, $\Phi$ may no longer be equivalent to a polynomial map and the algebraic method in \cite{Brieskorn1} seems to be no longer applicable. However the analytic method in \cite{Saito2} can be generalized for this new setting, as will be presented in the present paper, where we study the De Rham cohomology group by the $\Check{\text{C}}$ech cohomology group with respect to an atlas \eqref{atlas2}
of relative charts due to Forster and Knorr. 

In the present  new setting, the morphism $\Phi$ may also no-longer neccesarily  have only isolated critical points but may have higher dimensional critical sets in the fibers of $\Phi$. For such semi-global settings, the vanishing cycles in the nearby fibers of $\Phi$ are no-longer purely  middle dimensional but mixed dimensional, and the De Rham cohomology groups are no-longer pure but mixed dimensional. Then, we need to solve  some topological problems. We also need to find a suitable Stein open covering of the fibration $\Phi$ in order to apply the Forster-Knorr result to the $\Check{\text{C}}$ech complex. This is achieved in the present paper by showing an existence of some enhanced structure on the atlas of relative charts $Z$ ({\it Lemma} \ref{based-lifting}).

\medskip
The  proof of the Main Theorem is divided into the following 4 steps. 

\medskip
\noindent
{\bf Step 1.}  We describe two (including Hodge to De Rham) spectral sequences, describing the direct images 
${\R}\Phi_*(\Omega^\bullet_{Z/S}, d_{Z/S})$ and see that the restriction from $Z$ to $Z'$  induces an isomorphism:  
$\R\Phi_*(\Omega_{Z/{S}}^\bullet,d_{Z/{S}}) \simeq \R\Phi_*(\Omega_{Z'/{S}}^\bullet,d_{Z'/{S}})$.  

\medskip
\noindent
{\bf Step 2.}
For any point $t\in S$, we find a Stein open neighborhood $S^*\subset S$ such that $Z'\cap \Phi^{-1}(S^*)$ is covered by atlases  of {\it relative charts} in the sense of Forster-Knorr, which satisfy  an additional condition, called complete intersection, and which form a family of atlases parametrized by the radius $r$ (${}^\exists r^*\le r \le 1$) of polydiscs.

\medskip
\noindent
{\bf Step 3.}\!\!
We introduce the Koszul-De Rham algebra $\K^{\bullet, \star}_{D(r)\times S^*/S^*,\bf f}$ on each relative 
chart $D(r)\times S^*$, as a sheaf of double dg-algebras over the dg-algebra $\Omega^\bullet_{D(r)\times S^*/S^*}$ of the relative De Rham complex, which gives an $\O_{D(r)\times S^*}$-free ``resolution" of the relative De Rham complex $(\Omega_{Z/{S}}^\bullet,d_{Z/{S}})$ up to the critical set $C_\Phi$, where the ``gap", i.e.\ the cohomology groups of  $\K^{\bullet, \star}_{D(r)\times S^*/S^*,\bf f}$ w.r.t.\ $\star$, is given by a sequence,  indexed by $s\in\Z_{\ge0}$, of complexes $(\mathcal{H}_\Phi^{\bullet,s},d_{DR})$ of coherent $\O_Z$-modules  supported in $C_\Phi$.

\medskip
\noindent
{\bf Step 4.}
The $\Check{\text{C}}$ech cohomology groups of the De Rham complex $(\Omega_{Z/{S}}^\bullet,d_{Z/{S}})$ and the  lifted $\Check{\rm{C}}$ech cohomology groups  of the  Koszul-De Rham algebra appear periodically in the first and the second terms of a long exact sequence of cohomology groups, where the second terms are coherent near $t\in S$ due to Forster-Knorr's result \cite{Forster-Knorr}.  The third terms of the sequence, described by the complexes $\mathcal{H}_\Phi^{\bullet,s}$ in Step 3, are also coherent on $S$, since $C_\Phi$ is proper over $S$. This shows that the first terms, i.e. the direct images of the De Rham complex, is also coherent near $t\in S$.


Since the coherence is a local property on ${S}$, this completes the proof.

\bigskip 
\noindent
\begin{remark} (i) A flat map $\Phi$ is an open map and defines a family of constant 
\[
n\ :=\ \dim_\C Z - \dim_\C {S}
\]
dimensional fibers.\ So, if $n\!=\!0$, the map $\Phi$ is proper finite and hence the Main Theorem is trivial. Therefore, in the present paper, we shall assume $n \!>\! 0$.\

\smallskip 
\noindent
 (ii) We introduce in the present note some tools which seem to be unknown in the literature: 
 
 \noindent
 a) The {\it atlases of some special intersection nature} ({\it Lemma} 3.2 and 3.3) in {\bf Step 2}, 
 
 \noindent
  b)\! The {\it sequence of chain complexes} $(\mathcal{H}^{\bullet,s}_\Phi,d_{DR})$ ($s\in\Z_{>0}$) of coherent $\O_Z$-modules supported in the critical set $C_\Phi$ of $\Phi$ in {\bf Step\! 3}.\!

   Both are essential for our purpose to give an analytic proof of the Main Theorem.
\end{remark}

\medskip
\noindent 
{\it Notation.} We  use cohomologies of three kinds: 1.\ De Rham complex,  2.\ derived functor of direct image $\Phi_*$, and 3.\ Koszul complex. According to them, when it is possible, we distinguish their indices by the following choices: 1.\ ``$\bullet$" or ``$p$" for $p\in \Z_{\ge0}$, 2.\ ``$*$" or ``$q$" for $q\in \Z_{\ge0}$, and 3.\ ``$\star$" or ``$s$" for $s\in \Z_{\ge0}$, respectively.

\medskip
\noindent
{\bf Acknowledgment}
The author expresses his gratitude to Changzheng Li, Si Li, Alexander Voronov, Mikhail Kapranov, Alexey Bondal, Tomoyuki Abe and Pierre Schapira for helpful discussions. The discussions with Changzheng Li and Si Li clarified the construction of the atlas in \S3, the discussions with Mikhail Kapranov and Alexander Voronov clarified the Koszul-de Rham algebra in \S4 and the discussions with Alexey Bondal and Tomoyuki Abe clarified the homological algebras in \S5. The author expresses gratitude to Dmytro Shklyarov, who after the present paper was submitted, informed author the paper \cite{B-P} \footnote
{In \cite{B-P} (Theorem 4.1),  some algebra similar to the (but differently graded) Koszul-De Rham algebra in the present paper was introduced in order to calculate the Hochshild and cyclic homology of a complete intersection affine variety (similar to the complete intersection variety $U$ in a Stein manifold $W$ in the present paper). It should be of interest to find a relation of the present work with the Hochshild and cyclic homology. However, since the description in \cite{B-P} misses the parameter space $S$ in the present paper, the relation seems not promptly apparent.} and pointed out some sign problem in the present paper.
He expresses also his gratitude to Scott Carnahan and Simeon Hellerman for their reading of the early version of the paper. 
Finally, the 
author thanks deeply an anonymous referee for carefully reviewing the manuscript. 

This work was supported by World Premier International Research Center Initiative (WPI), MEXT, Japan, and by JSPS Kakenhi Grant Number 25247004.

\medskip

\section{Step 1:  Hodge to De Rham spectral sequence}

\noindent
Throughout the present paper, we keep the setting and notation of the Main Theorem. 
Recall that the direct image is given by the hypercohomology $\R^\star\Phi_*(\Omega_{Z/{S}}^\bullet,d_{Z/{S}}) $ 
and is described by the limit of the following two spectral sequences: 
\begin{equation}
\label{spectral}
\begin{array}{ccl}
'E_2^{p,q}&:=& \mathrm{H}^p(\mathrm{R}^q\Phi_*(\Omega_{Z/{S}}^\bullet),d_{Z/{S}}) \\
''E_2^{q,p}&:=& \mathrm{R}^q\Phi_*(\mathrm{H}^p(\Omega_{Z/{S}}^\bullet,d_{Z/{S}})) .
\end{array}
\end{equation}
The $E_1$-term 
$'E_1^{p,q}= \mathrm{R}^q\Phi_*(\Omega_{Z/{S}}^p)$
of the first spectral sequence is sometimes called {\it Hodge to De Rham (or, Fr\"olicher) spectral sequence} for the De Rham cohomology relative to $\Phi$.

 Let us consider the second spectral sequence $''\!E_2^{q,p}$  \eqref{spectral}, which we shall denote also by $''\!E_2^{q,p}(Z/S)$ when we stress its dependence on the space $Z/S$.
  We first remark that $\mathrm{Supp}(\mathrm{H}^p(\Omega_{Z/{S}}^\bullet,d_{Z/{S}})) \subset C_\Phi$ for $p>0$ (here we recall that $C_\Phi$ is the critical set of $\Phi$ so that $\Phi\!\mid_{C_\Phi}$ is a proper morphism), since the Poincar\'e complex $(\Omega_{Z/{S}}^\bullet, d_{Z/{S}})$ relative to $\Phi$ is exact outside the critical set of $\Phi$.
 On the other hand,  we have $\mathrm{H}^0(\Omega_{Z/{S}}^\bullet,d_{Z/{S}})\simeq \Phi^{-1}\O_{{S}}$ (since $n>0$). That is, $\mathrm{H}^0(\Omega_{Z/{S}}^\bullet,d_{Z/{S}})$ is constant along fibers of  $\Phi$. Therefore, we observe (cf.\ \cite{Thom}):

\medskip
\noindent
{\bf Fact 1.}\  {\it 
Let $Z'$ be an open subset of $Z$ satisfying 
1. $C_F\subset Z'$ and 2. $Z'$ is a deformation retract of $Z$ along fibers of $\Phi$. 
Then, the inclusion map $Z'\to Z$ induces bijection $''\!E_2^{q,p}(Z/S) \simeq\     
''\!E_2^{q,p}(Z'/S)$\! 
and, hence, of the hypercohomology groups 
$\R^k \Phi_*\!(\Omega_{Z/{S}}^\bullet,\!d_{Z/{S}}) \simeq \R^k \Phi_*\!(\Omega_{Z'/{S}}^\bullet,\!d_{Z'/{S}})$ as $\O_{{S}}$-module.
}

\bigskip
\section{Step 2: Atlas of complete intersection relative charts}

We construct an atlas consisting of charts relative to the map $\Phi$ (called {\it relative charts} by Forster-Knorr \cite{Forster-Knorr}), which satisfy an additional condition called a complete intersection ({\it Lemma} 3.2).  The atlas shall be used in {\bf Step 4} to calculate the limit of the spectral sequence $'E_1^{p,q}$ by a generalization of $\check{\mathrm{C}}$ech cohomology. 
The construction of the atlas asks the existence of a certain covering of the manifold $Z$ of quite general nature ({\it Lemma} 3.3). Since the proof of the existence of such a covering is rather of technical nature and is independent of the other part of the paper,  hurrying readers are suggested to skip the present section and to go to \S4 after looking at definitions and results, and to come back to the proofs if necessary.

\medskip
\begin{definition} {\bf 1.}  (\cite{Forster-Knorr}) A {\it relative chart}  for a flat family $\Phi$ 
is a closed embedding 
\begin{equation}
j \ : \ U\ \longrightarrow \ D(r)\times {S}_U
\end{equation}
where $U$ is an open subset of 
$Z$ (which  may be empty), ${S}_U$ is an open subset of ${S}$ 
with $\Phi(U)\subset {S}_U$ and
$
D(r)
$ 
is a {\it polycylinder of the radius} $r$ \footnote
{A polycylinder of radius $r$ is by definition a domain of the form $\{(z_1,\cdots,z_m)\in\C^m \mid |z_i-a_i|<r\ (i=1,\cdots,m)\}$ where $(a_1,\cdots,a_m)\in \C^m$ is called the {\it center} of the polycylinder.
}
in some $\C^m$  ($m\in\Z_{\ge 0}$) 
such that the diagram 
\begin{equation} 
\label{relative.chart}
\begin{array}{ccccl}
\vspace{0.2cm}

U\ & &\!\! \overset{j}{-\!\!\! \longrightarrow} \!\!& &  D(r)\times {S}_U  \\
\vspace{0.2cm}

\Phi\mid_U \!\!\!&\searrow & & \swarrow &\!\!\! pr_{{S}_U} \\
&& {S}_U &&
\end{array}
\end{equation}
commutes. We sometimes  call the embedding $j$ a relative chart, for simplicity.
\end{definition}

\medskip
\begin{definition} {\bf 2.}
A relative chart is called a {\it complete intersection} if the $j$-image of $U$ is a complete intersection subvariety in $D(r)\times {S}_U$. That is, there exists a  sequence $f_1,\cdots, f_l$ of holomorphic functions on $D(r)\times {S}_U$, where $l$ is the codimension of $U$ in $D(r)\times S_U$:
\[
l:=m+\dim_{\C}S-\dim_{\C}Z=m- n
\]
\noindent
such that $j$ induces a  natural isomorphism $j^*: \O_{D(r)\times S_U}/(f_1,\cdots,f_l) \simeq \O_U$. 
\end{definition}

\medskip
\begin{lem}
{\it 
 Let $j_{k }:  U_{k} \rightarrow  D_{k}(r)\times {S}_k$ ($k\in K$) be a finite system of relative charts. 
 Then the fiber product 
 \begin{equation}
\label{productchart}
j_K: U_K \rightarrow  D_{K}(r) \times {S}_K
\end{equation} 
of the morphisms $j_{k}$ ($k\in K$) over $S_K:=\cap_{k\in K} S_{k}$, where we set 
 $D_K(r):=\prod_{k\in K} D_{k}(r)$,  is a relative chart.
 }
\end{lem}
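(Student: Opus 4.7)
I would set $U_K := \bigcap_{k \in K} U_k$, which is open in $Z$ because $K$ is finite, and define the candidate map
$$
j_K : U_K \longrightarrow D_K(r) \times S_K,\qquad x \longmapsto \Bigl(\, \bigl((\mathrm{pr}_{D_k(r)} \circ j_k)(x)\bigr)_{k \in K},\; \Phi(x)\,\Bigr).
$$
Both the inclusion $\Phi(U_K) \subset S_K$ and the commutativity of diagram~\eqref{relative.chart} for $j_K$ are immediate, since they follow componentwise from the corresponding properties of each $j_k$ and from the identity $\mathrm{pr}_{S_k}\circ j_k = \Phi|_{U_k}$. What remains is the verification that $j_K$ is a closed embedding.

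The plan is to factor $j_K$ through the full product of the $j_k$. Consider the composite
$$
U_K \;\xrightarrow{\;\Delta\;}\; \prod_{k \in K} U_k \;\xrightarrow{\;\prod_{k} j_k\;}\; \prod_{k \in K}\bigl(D_k(r) \times S_k\bigr),
$$
where $\Delta(x)=(x,\dots,x)$ is the iterated diagonal. The map $\Delta$ is a closed embedding because $Z$, and hence each $U_k$, is Hausdorff; and $\prod_{k} j_k$ is a closed embedding as a finite product of closed embeddings. Because $\mathrm{pr}_{S_k}\circ j_k=\Phi|_{U_k}$ for every $k$, the image of the composite sits inside the subset of $\prod_k(D_k(r)\times S_k)$ on which all $S_k$-components coincide. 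This subset is closed since $S$ is Hausdorff, and the requirement that the common value belong to every $S_k$ forces it to lie in $S_K=\bigcap_k S_k$; thus the subset identifies with $D_K(r)\times S_K$, and under this identification the composite equals $j_K$.

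A closed embedding whose image is contained in a closed subspace corestricts to a closed embedding into that subspace (the image stays closed when intersected with the subspace, and corestriction preserves the homeomorphism onto its image). Consequently $j_K:U_K\to D_K(r)\times S_K$ is a closed embedding, and hence a relative chart in the sense of Definition~1. The result is essentially that finite products, together with closed-diagonal corestrictions, of closed embeddings remain closed embeddings; combined with Hausdorffness of $Z$ and $S$ this leaves no genuine obstacle, and I expect the verification to be routine.
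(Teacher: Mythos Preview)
Your argument is correct. You factor $j_K$ as the diagonal $U_K\hookrightarrow\prod_k U_k$ (closed because $Z$ is Hausdorff) followed by the product $\prod_k j_k$ (closed as a finite product of closed embeddings), and then corestrict to the closed ``equal $S$-components'' locus $D_K(r)\times S_K\subset\prod_k(D_k(r)\times S_k)$. Each step is sound, and the identification of that locus with $D_K(r)\times S_K$ is exactly right once one rewrites $\prod_k(D_k(r)\times S_k)\cong D_K(r)\times\prod_k S_k$ and uses Hausdorffness of $S$.

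The paper takes a different, more hands-on route: it observes that $j_K$ is obviously a (local) embedding and then checks closedness of the image sequentially, using that manifolds are metrizable. Given a sequence $z_i\in U_K$ with $j_K(z_i)$ convergent in $D_K(r)\times S_K$, projecting to each factor shows $j_k(z_i)$ converges in $D_k(r)\times S_k$; since each $j_k$ is a closed embedding, $z_i$ converges in every $U_k$, hence in $U_K$. Your categorical argument is cleaner and avoids the appeal to metrizability, while the paper's sequential check is shorter to state once one is willing to invoke first-countability; both yield the same conclusion with no essential difficulty.
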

\begin{proof}
The morphism $j_K$ is obviously a local embedding. We need to show that its image is closed. Suppose there is a sequence $z_i\in U_K$ {\small ($i=1,2,\cdots$)} such that the sequence $j_K(z_i)$ converges to a point in $D_K(r)\times S_K$. Then the projection sequence $j_k(z_i)$ also converges in $D_k\times S_k$, implying that the sequence $z_i$ converges in $U_k$ for all $k\in K$. Then $\underset{i}{\lim}\{x_i\}$ belongs to $\cap_{k\in K}U_k=:U_K$ 
(cf.\ \cite{Forster-Knorr} Cor.\ 3.2). 
\end{proof}

\begin{definition} {\bf 3.}
We shall call $j_K$ \eqref{productchart} the {\it intersection of relative charts} $j_k$ ($k\in K$).
\end{definition}

\smallskip
\begin{remark}
Let $\dim D_k=m_k$ and $l_k=m_k-n$ for $k\in K$. Then, $j_K(U_{K})$ has codimension equal to $l_K:=\sum_{k\in K}m_k -n=\sum_{k\in K} l_k+ (\#K-1)n$.
Even if all $j_k$ ($k\in K$) are complete intersections, their intersection $j_K$  may not necessarily be a complete intersection. Therefore, the following lemma is non-trivial.
\end{remark}

\bigskip
\begin{lem}
\label{complete-intersection-atlas}
{\it 
Let $\Phi:Z\to S$ be any flat holomorphic map. Then there exists a function $r: Z\to \R_{>0}$  and  a relative chart $j_z:U_z(r)\to D_z(r)\times S_z$ for all $z\in Z$ and $0<r< r(z)$ such that 1) $j_z(z)$ is independent of $r$ and  2) $p_1\circ j_z: U_z(r)\to D_z(r)$ is a bijection, mapping $z$ to the center of the polycylinder of radius $r$. Furthermore,  any finite intersection of these relative charts is  complete intersection.
}
\end{lem}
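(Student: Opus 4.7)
My plan is to realize each $j_z$ as a graph embedding using a holomorphic coordinate chart from a carefully chosen locally finite atlas of $Z$; the complete intersection property for finite intersections then reduces to ensuring that the transition functions between the selected charts extend to the defining polydiscs. Setting $m:=\dim_\C Z$, fix a locally finite atlas $\{(W_\alpha,\psi_\alpha)\}$ of $Z$ with $\psi_\alpha:W_\alpha\to\C^m$ biholomorphic onto an open set (available by paracompactness). For each $z\in Z$, pick one index $\alpha(z)$ with $z\in W_{\alpha(z)}$ and set
\[
D_z(r):=\{x:|x-\psi_{\alpha(z)}(z)|<r\},\ U_z(r):=\psi_{\alpha(z)}^{-1}(D_z(r)),\ j_z(u):=(\psi_{\alpha(z)}(u),\Phi(u)),
\]
with $S_z\subset S$ any open neighborhood of $\Phi(z)$ containing $\Phi(U_z(r))$. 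Conditions 1) and 2) are then immediate, and the single chart $j_z$ is a complete intersection cut out by the $\dim_\C S$ equations $s-\Phi\circ\psi_{\alpha(z)}^{-1}(x)=0$.

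The decisive step is to choose the radius function $r(z)>0$ --- refining the atlas if necessary --- so as to secure the following \emph{separation property}: for every $z$ and every $0<r<r(z)$, any chart $W_\beta$ that meets $U_z(r)$ actually contains $U_z(r)$. Granting this, take any non-empty $U_K:=\bigcap_{i=1}^k U_{z_i}(r_i)$ and any reference point $u_0\in U_K$. Each inclusion $u_0\in U_{z_i}\subset W_{\alpha(z_i)}$ shows that $W_{\alpha(z_i)}$ meets $U_{z_1}(r_1)$, so the separation property forces $U_{z_1}(r_1)\subset W_{\alpha(z_i)}$, and hence the transition $T_i:=\psi_{\alpha(z_i)}\circ\psi_{\alpha(z_1)}^{-1}$ extends holomorphically to all of $D_{z_1}(r_1)$. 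With $D_K:=\prod_i D_{z_i}(r_i)$ and $S_K:=\bigcap_i S_{z_i}$, the image $j_K(U_K)$ is exactly the zero locus in $D_K\times S_K$ of
\begin{equation*}
s-\Phi\circ\psi_{\alpha(z_1)}^{-1}(x^{(1)})=0,\qquad x^{(i)}-T_i(x^{(1)})=0\ \ (i=2,\dots,k),
\end{equation*}
a system of $\dim_\C S+(k-1)m$ holomorphic functions, which matches the codimension. These equations solve for $s$ and $x^{(2)},\dots,x^{(k)}$ as holomorphic functions of $x^{(1)}$, so the Jacobian with respect to $(s,x^{(2)},\dots,x^{(k)})$ is the identity matrix; they form a regular sequence generating the ideal of $j_K(U_K)$, and $j_K$ is a complete intersection.

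The principal obstacle is the separation property itself. For atlas charts $W_\beta$ with either $z\in W_\beta$ (interior) or $z\notin\overline{W_\beta}$ (separated), local finiteness immediately produces a positive lower bound for $r(z)$ enforcing the property; the subtle case is $z\in\partial W_\beta$, where every arbitrarily small neighborhood of $z$ straddles the boundary of $W_\beta$ and hence meets $W_\beta$ without being contained in it. Handling this is the role of the subsequent \emph{Lemma} 3.3, which constructs a cover of $Z$ with the required combinatorial separation at every point. Once that covering lemma is in place, the construction of $j_z$ above and the regular-sequence verification are entirely routine.
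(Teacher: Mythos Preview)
Your overall strategy—graph embeddings $j_z(u)=(\psi(u),\Phi(u))$ and defining equations from coordinate comparisons—matches the paper's. But there is a genuine gap in your ``separation property'' and in what you claim Lemma~3.3 provides.

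The separation property as you state it (any $W_\beta$ meeting $U_z(r)$ must contain it) is \emph{unachievable} for a locally finite atlas with more than one chart: connectedness forces some $\partial W_\beta$ to be nonempty, and for any $z\in\partial W_\beta$ the property fails for every $r>0$. This is exactly the boundary obstruction you identify, and no refinement of a locally finite atlas removes it—Lemma~3.3 does not furnish such a cover.

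What Lemma~3.3 actually gives is different and weaker-looking but sufficient: an atlas (indexed by \emph{all} points of $Z$, not locally finite) with the property that for any point $p$, the union of all atlas charts \emph{containing} $p$ lies inside a single coordinate ball $B(p,R(p)(1-\varepsilon))$ embeddable in $\C^N$. The paper exploits this by choosing an auxiliary point $u_0\in U_K$; then every $U_{z_i}(r_i)$ contains $u_0$, hence lies in the one ambient ball with global coordinates $z^1,\dots,z^N$. The defining equations are written via this auxiliary system,
\[
z^j\circ\varphi_{z_k}^{-1}=z^j\circ\varphi_{z_{k'}}^{-1}\quad(1\le j\le N,\ k,k'\in K\ \text{consecutive}),\qquad t_i=\Phi_i\circ\varphi_{z_{k_0}}^{-1},
\]
which are globally defined on $D_K\times S_K$ because each $\varphi_{z_k}^{-1}(D_{z_k}(r_k))$ lands in the domain of the auxiliary coordinates—\emph{not} because $U_{z_1}(r_1)\subset W_{\alpha(z_i)}$. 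Your pairwise transitions $T_i=\psi_{\alpha(z_i)}\circ\psi_{\alpha(z_1)}^{-1}$ need the latter inclusion, which Lemma~3.3 does not guarantee. Replace the $T_i$ by the auxiliary-coordinate formulation and your equation count and Jacobian check go through unchanged.
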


 \begin{proof}  
 We first provide the following lemma of a quite general nature.

\bigskip
\begin{lem} 
{\it 
Any complex manifold $M$ of dimension $N$ admits an atlas (=\! a collection of open charts covering $M$) such that, for any point of $M$, the union of charts containing the point is holomorphically embeddable into an open set in $\C^N$. 
 \footnote{A parallel statement obtained by replacing the terminologies: complex manifold, holomorphically and $\C^N$ by $C^\infty$-manifold, diferentiably and $\R^N$, respectively, holds by the same proof.}
}
\end{lem}
\begin{proof}
By the assumption on manifolds (see Footnote 1), $M$ is metrizable, and let $d$ be a metric on $M$.  For  $p\in M$ and $r\in \R_{\ge0}$, let $B(p,r):=\{q\in M\mid d(p,q)<r\}$ be the ball neighborhood of a point $p$ of radius $r$.
We define a function on $p\in M$ by  

\smallskip
\noindent
$R(p):=\sup\{ r\in \R_{\ge0}\mid B(p,r) \text{ is holomorphically embeddable
 in a domain in } \C^N\}$.\footnote{Here, embeddings are not necessarily isometric.}

\medskip
\noindent
Actually, $R$ is a positive valued continuous function on $M$ except if it takes constant value $\infty$.  For any fixed real number $b$ with $0\! <\! b\! <\! 1/3$, we show that the atlas $\{ (B(p,R(p)b), \varphi_p\}_{p\in M}$, where $\varphi_p$ is a holomorphic embedding of $B(p,R(p)b)$ into $\C^N$, has the desired property.\footnote
{For our later application, we may assume further more that $\varphi_p$ is extendable to a holomorphic embedding of the ball $B(p, R(p)(1\!-\!(1\!-\!3b)/(1\!+\!b))$ into $\C^N$ since $3b\!<\!1$ and $1\!-\!(1\!-\!3b)/(1\!+\!b))\!<\!1$.
}
Proof: Suppose $p\in M$ belongs to the chart $B(q,R(q)b)$ centered at $q\in M$. That means $d(p,q)<R(q)b$ and then $B(p,R(q)(1-b))\subset B(q,R(q)(1-b+b'))$ where $b':=d(p,q)/R(q)<b$ so that $1-b+b'<1$. Hence, the ball $B(q,R(q)(1-b+b'))$ is embeddable in $\C^N$, and so is $B(p,R(q)(1-b))$. This implies $R(p)\ge R(q)(1-b)$. 
On the other hand, for any small $\varepsilon>0$,
$B(q,R(p)-d(p,q)-\varepsilon)\subset B(p, R(p)-\varepsilon)$ is embeddable in $\C^N$,  one gets $R(q)\ge \lim_{\varepsilon\downarrow 0} (R(p)-d(p,q)-\varepsilon)=R(p)-d(p,q)$ and, hence, $(1+b)R(q)> (1+b')R(q)=R(q)+d(p,q)\ge R(p)$. 
Note that the chart $B(q,R(q)b)$ is contained in the ball $B(p,R(q)2b)$ of radius  $R(q)2b=R(q)(1-b)-R(q)(1-3b)$. Recalling $1-3b>0$ and inequalities $R(q)(1-b)\le R(p), \ R(q)>R(p)/(1+b)$, the radius is less than $R(p)1-R(p)(1-3b)/(1+b))=R(p)(1-(1-3b)/(1+b))$ which is a constant ($<R(p)$) independent of the point $q$. That is, all charts containing $p$ are covered by the same ball $B(p, R(p)(1-(1-3b)/(1+b))$ which is embeddable in $\C^N$.
\end{proof}
 
 We return to the proof of {\it Lemma} \ref{complete-intersection-atlas}. 
Let $\{ (B(\underline{z},R(\underline{z})b), \varphi_{\underline z}\}_{\underline z\in Z}$ be the atlas of $Z$ described in {\it Lemma} 3.3 (with the additional assumption of Footnote 5).  
 For any point $\underline{z}\in  Z$,  let $S_{\underline{z}}$ be a local coordinate neighborhood of $\Phi(\underline{z})$ in ${S}$.

Then, one finds easily a positive real number $r(\underline{z})$ such that for any real $r$ with $0<r<r(\underline{z})$, the polycylinder $D(r)$ of radius $r$ centered at $\varphi_{\underline{z}}(\underline{z})$ is contained  in the domain $\varphi_{\underline{z}}(B(\underline{z},R(\underline{z})b))\subset \C^{N}$  and  $\Phi(\varphi_{\underline{z}}^{-1}(D(r)))\subset {S}_z$.  
Then, 
$$ \begin{array}{rclcl}
j_{\underline{z}} &:& U_{\underline{z}}(r):=\varphi_{\underline{z}}^{-1}(D(r)) & \longrightarrow  & \ D(r)\ \times \ S_{\underline{z}} \\
& &\ \  \underline{z'} &\longmapsto &(\varphi_{\underline{z}}(\underline{z'}),\Phi(\underline{z'}))
\end{array}
$$ 
gives a family (parametrized by $r$) of relative chart centered at $z$.
The codimension $l$ of the image ${j}_{\underline{z}}(U_z(r))$ in $D(r)\times S_{\underline{z}}$ is equal to $m-n=N-n=\dim_\C S$. Actually, the image is determined  by a system of equations:
$$
\{t_i-\Phi_i\circ \varphi_{\underline{z}}^{-1}=0\}_{i=1}^{\dim_\C{S}},
$$ 
where $(t_1,\cdots,t_{\dim_\C {S}})$ is a local coordinate system of $S_{\underline{z}}$ and $\Phi_i$ is the $i$th coordinate component of the morphism $\Phi$.
Thus $j_{\underline{z}}$ is complete intersection.

Let us show that, for any finite set $K=\{(\underline{z},r_{\underline{z}})$ of $\underline{z}\in Z$ and $0<r_{\underline{z}}<r(\underline{z})$  such that $U_K:=\cap_{(\underline{z},r_{\underline{z}}) \in K}U_{\underline{z}}(r_{\underline{z}})$ (and, hence, $S_K:=\cap_{(\underline{z},r_{\underline{z}})\in K} S_{\underline{z}}$) is non-empty, the intersection relative chart $j_K: U_K\to D_K(r)\times S_K$ is complete intersection. 

Recall that $j_K$ is given by the fiber product morphism: 
$$
j_K\ :\ 
\underline{z}' \in U_K \longmapsto ( (\varphi_{\underline{z}}(\underline{z}'))_{(\underline{z},r_{\underline{z}})\in K}, \Phi(\underline{z}'))\in \prod_{(\underline{z},r_{\underline{z}})\in K}D_{\underline{z}}(r_{\underline{z}}) \times S_K,
$$ 
where the codimension of  $j_K(U_K)$ is  equal to $l_K= \#K\cdot \dim_\C S+ (\#K-1)n$.

In case of $U_K\not=\emptyset$, the existence of a point $\underline{z}_0\in U_K$ implies the inclusion:  
$$
\underset{(\underline{z},r_{\underline{z}})\in K}{\cup} U_{\underline{z}}(r_{\underline{z}}) \ \subset \underset{(\underline{z},r_{\underline{z}})\in K}{\cup} B(\underline{z}, R(\underline{z})b) \ \subset \ B(\underline{z}_0, R(\underline{z}_0)(1-\varepsilon))
$$ 
for {\small $\varepsilon:=(1-3b)/(1+b)$} ({\it Lemma} \ref{complete-intersection-atlas}). Let $z^1,\cdots,z^N$ be the coordinates of $\C^N$ where  the ball
$B(\underline{z}_0, R(\underline{z}_0)(1-\varepsilon))$ is embedded by extending the domain of $\varphi_{\underline{z}_0}$. We also denote by $\varphi_{\underline{z}}^{-1}$ ($\underline{z}\in K$) the composition map: {\small $D_K\times S_K\to D_{\underline{z}} \to U_{\underline{z}} \subset Z$}.

Then,  the image $j_K(U_K)$ is determined by the following two type of equations:

\medskip
\noindent
1) System equations for identifying polycylinders $D_{\underline{z}}(r_{\underline{z}})$ ($\underline{z}\in K$) with each other. 
That is, for each fixed $j$ with $1\le j\le N$, all  $z_j\circ \varphi_{\underline{z}}^{-1} $ ($\underline{z}\in K$) are equal to each other. 
There are {\small $(\#K-1)N=(\#K-1)(n+\dim_\C S)$} number of equations:
 $$
  z^j\circ \varphi_{\underline{z}_0}^{-1} = z^j\circ \varphi_{\underline{z}_1}^{-1}=\cdots\ =z^j\circ \varphi_{\underline{z}_{k^*}}^{-1}  \qquad  1\le j\le N \ .
 $$

\noindent
2) System equations for the graph of $\Phi$ on each polycylinder $D_{\underline{z}}(r_{\underline{z}})$ ($\underline{z}\in K$).
That is, for each fixed $i$ with $1\le i\le \dim_\C S$, $t_i= \Phi_i\circ \varphi_{\underline{z}}^{-1}$ for all $\underline{z}\in K$.  There are $\#K\cdot\dim_\C S$ number of equations.  However, after the identifications in 1), we do not need all equations but only for one point $\underline{z}\in K$: $t_i = \Phi_i\circ \varphi_{\underline{z}}^{-1}$ ($1\le i\le \dim_\C S$), that is, the number of necessary equation is equal to $\dim_\C S$. 

\medskip
Thus the total number of necessary equations is {\small $(\#K-1)(n+\dim_\C S)+\dim_\C S=\#K\cdot \dim_\C S_K +(\#K-1)n=\dim_\C(D_K\times S_K)-\dim_\C U_K$}, showing that the image $j_K(U_K)$ is a complete intersection subvariety of $D_K\times S_K$. 
It is also clear that the Jacobian of this system of  defining equations has constant maximal rank. \footnote
{To be precise, one need to show that any point in $D_K\!\times\! S_K$ satisfying the relations 1) and 2) is in the image of $j_K$. But this can be shown by a routine work so that we omit it.
}

This completes the proof of {\it Lemma} \ref{complete-intersection-atlas}.
\end{proof}

Recall  the domain $Z'\subset Z$ in the Main Theorem in \S1 Introduction. We assume that $\partial Z'$ in $Z$ is smooth and transversal to all fibers $\Phi^{-1}(t)$ for all $t\in S$. 

 \medskip
\noindent
{\bf Fact 2.}\  {\it 
For any point $t$ of ${S}$, there  exist 
a Stein open neighborhood ${S}^*$, a finite number of relative charts over ${S}^*$
\begin{equation}
\label{atlas}
j_k \ : \ U_k \ \longrightarrow \ D_k(1)\times {S}^*, \quad 0\le k\le k^*
\end{equation}
and a real number $0<r^*<1$ with the properties: for all $r$ with $r^*\le r\le 1$, set 
\begin{equation}
\label{z(r)}
U_{k}(r) :=  j_k^{-1}(D_k(r)\times {S}^*) \text{\quad and \quad}
Z'(r) := \cup_{k=0}^{k^*} U_k(r). \qquad
\end{equation}
Then, we have the following.

\smallskip
\noindent
1. 
One has the inclusions: $Z_{{S}^*}:=\Phi^{-1}({S}^*) \supset Z'(r) \supset Z'_{{S}^*}:=\Phi^{-1}({S}^*)\cap Z'$. 

\noindent
2.  $Z'(r)$ is retractible  to $Z'_{{S}^*}$ along fibers of $\Phi$.

\noindent
3. For any  $K\subset \{0,\cdots,k^*\}$, the relative chart $j_K$ is a complete intersection.
 }
 
 \medskip
\noindent
{\bf Corollary.}
For $r$ with $r^*\le r\le 1$, we have $\O_{S^*}$-isomorphisms
\begin{equation}
\label{r-independent}
\R^k \Phi_*\!(\Omega_{Z_{S^*}/{S^*}}^\bullet,\!d_{Z_{S^*}/{S^*}}) \simeq \R^k \Phi_*\!(\Omega_{Z(r)/{S^*}}^\bullet,\!d_{Z(r)/{S^*}}) .
\end{equation}

 \begin{proof}
For each point $z\in \bar{Z'}\cap \Phi^{-1}(t)$, we consider a relative chart ${j}_z: U_z(r) \to D_z(r)\times {S}_z$ of {\it Lemma} \ref{complete-intersection-atlas}. We consider two cases.

Case 1.  $z\in Z'$:  Choose any real $r$ such that $0<r<r(z)$ and ${U}_z(r)\subset Z'$. 

 Case 2. $z\in \partial Z'$: Choose any real $r$  such that $0<r<r(z)$ and $U_z(r')$ (as a manifold with corners) is transversal to $\Phi^{-1}(t)$ for all real $r'$ with $0<r' \le r$.

Since $\bar{Z'}\cap \Phi^{-1}(t)$ is compact, we can find a finite number of relative charts $\tilde j_k: \tilde U_k\to D_k(r_k)\times \tilde S_k$  ($0\le k\le k^*$) centered  at  points $\underline{z}_0,\cdots, \underline{z}_{k^*}$ on $\bar{Z'}\cap \Phi^{-1}(t)$ so that the union $\cup_{k=0}^{k^*} \tilde U_k$ contains the compact closure $\bar{Z'}\cap \Phi^{-1}(t)$. Then,  we can find a Stein open neighborhood  ${S}^*$ of $t$ such that 1) its compact closure $\bar S^*$ is contained in $\cap_{k=0}^{k^*} {S}_k$, 2) $\bar{Z'}\cap \Phi^{-1}(\bar{S^*})$ is contained in  $\cup_{k=0}^{k^*} \tilde U_k(r)$, and 3) all fibers $\Phi^{-1}(t')$ for $t'\in \bar S^*$ and $U_k(r')$ ($0<r'\le r_k$) for the chart $j_k$ whose central point $z_k$ is on the boundary $\partial Z'$. By a suitable rescaling of the coordinate system of charts, we may assume that all radii $r_k$ ($0\le k\le k^*$) are equal to 1.  Then, due the compactness of  $\bar S^*$, there exists a real number $r^*$ with $0<r*<1$ such that $\bar {Z'}\cap \Phi^{-1}(\bar{S^*})$ is contained in  $\cup_{k=0}^{k^*} \tilde U_k(r')$ for all $r'$ with $r^*\le r'\le 1$. Then, we introduce the relative  chart 
\eqref{atlas} by setting $U_k:=U_k\cap j_k^{-1}(D_k(1)\times S^*)$ and define  $Z'(r)$ as in \eqref{z(r)}. Then, 1. is trivial by definition,  2. is  a routine work, for instance due to R. Thom \cite{Thom}, and 3. is true since the system of relative charts $\{\tilde j_k\}_{k=0}^{k^*}$ has already this property ({\it Lemma} \ref{complete-intersection-atlas}). 
To see \eqref{r-independent}, we recall the argument done in {\bf Fact 1.}
\end{proof}

Let us briefly describe how these relative charts shall be used in the sequel.

For any Stein open subset $S' \subset S^*$ and any real number $r$ with $r^*\le r\le 1$, we first consider the atlas (a collection of charts)
\begin{equation}
\label{atlas1}
\U(r,S'):=\{(U_k(r,S'):=j_k^{-1}(D_k(r)\times S'),\varphi_k)\}_{k=0}^{k^*}
\end{equation} 
of $Z'(r,S'):=\cup_{k=0}^{k^*}U_k(r,S')$.  Actually, this is a Stein open covering, since the intersection $U_K(r,S'):
=\cap_{k\in K}U_k(r,S')$ for any subset $K\subset \{0,\cdots,k^*\}$ is isomorphic to a closed submanifold of $D_K(r)\times S'$ and, hence, is Stein.  Therefore, the $'E_1$-term of the Hodge to De Rham spectral sequence $\mathrm{H}^q(Z'(r,S'),\Omega^p_{Z/S'})$ is given by the  $\Check{\text{C}}$ech complex $(\check{\mathrm{C}}^*(\U(r,S'),\Omega^p_{Z/S'}),\Check{\delta})$ with respect to the atlas $\U(r,S')$. 

The atlas $\U(r,S')$ is lifted to an atlas of relative charts:
\begin{equation}
\label{atlas2}
  \mathfrak{U} (r,S'):=\{ j_k|_{U_k(r,S')} : U_k(r,S')\to D_k(r)\times S'\}_{k=0}^{k^*}.
\end{equation} 
In \S4, we construct double dg-algebras  $\mathcal{K}^{\bullet,\star}_{D_K(1)\times S'/S',\bf{f}}$ on $D_K(1)\times S'$ (depending on a choice of bases $\bf f$ of the defining ideal of $U_K(r,S')$ in $D_{K}(r)\times S'$) and a natural epimorphism $\pi: \mathcal{K}^{\bullet,\star}_{D_K(1)\times S'/S',\bf{f}} \to \Omega^\bullet_{U_K(r,S')}$, where the kernel of $\pi$ is described by the complex $(\mathcal{H}_\Phi^{\bullet,s})_{s>0}$ of coherent sheaves, whoes support is contained in the critical set $C_\Phi$
(we use here the complete intersection property of the relative charts).  
Then, in \S5 we construct a ``lifting"
$\check{\mathrm{C}}^*(\widetilde{\mathfrak{U}}(r,S'),\mathcal{K}^{\bullet,\star}_{D(r)\times S'/S',{\bf f}})$  of the 
$\Check{\text{C}}$ech complex (here, we need once again  to ``lift" the atlas $\mathfrak{U}(r,S')$ to a based lifting atlas $\widetilde{\mathfrak{U}}(r,S')$ (see {\it Lemma} \ref{based-lifting})), 
whose cohomology groups induces a coherent module in a neighborhood of $t\in S^*$ due to the Forster-Knorr  Lemma (see {\it Lemma} \ref{Forster-Knorr}). Since $ \mathcal{K}^{\bullet,\star}_{D_K(1)\times S'/S',\bf{f}}$, $\Omega^\bullet_{U_K(r,S')}$ and $(\mathcal{H}_\Phi^{\bullet,s})_{s>0}$ form an exact triangle, we obtain also the coherence of the direct image of $\Omega^\bullet_{U_K(r,S')}$.

\medskip
\section{Step 3: Koszul-De Rham algebras}

We introduce the key concept of the present paper, called the {\it Koszul-De Rham algebra}, which is a double complex of locally free sheaves over a relative chart and gives a free resolution of the relative De Rham complex $\Omega^\bullet_{U/S}$ up to $C_\Phi$.  

More precisely, we slightly generalize the relative chart \eqref{relative.chart} $j:U\to D(r)\times S_U$  to \eqref{relativechart2} $j:U\to W$,\footnote
{The generalization is done mainly for notational simplification replacing $D(r)\times S_U$ by $W$. 
 In application in \S5, we shall use relative charts only in the form \eqref{relative.chart}.}
and 
the {\it Koszul-De Rham-algebra}, denoted by $(\K_{W/S,{\bf f}}^{\bullet,\star},d_{DR},\partial_\K)$,\footnote
{ The notation might have  better been $\K_{j,{\bf f}}^{\bullet,\star}$ than $\K_{W/S,{\bf f}}^{\bullet,\star}$.
}
 is a sheaf on $W$ of bi-graded $\Omega^\bullet_{W/S}$-algebras
 equipped  with 1) the double-complex structure: De Rham operator $d_{DR}$ and Koszul operator $\partial_\K$
 and 2)  a natural epimorphism: $(\K_{W/S,{\bf f}}^{\bullet,\star},d_{DR},\partial_\K)$ $\to (\Omega^\bullet_{U/S}, d_{U/S})$.
 If the chart \eqref{relativechart2} is a complete intersection as in {\bf Step 2}, then the morphism gives a bounded $\O_W$-free resolution of $\Omega^\bullet_{U/S}$ up to some  ``error terms" $(\mathcal{H}_\Phi^{\bullet,s})_{s>0}$.

\medskip
We first slightly generalize the concept of the relative chart (3.1), (3.2). 

\medskip
\begin{definition} {\bf 4.}
A {\it based relative chart} $(j, {\bf f})$  is a pair of a holomorphic closed embedding $j:U\to W$ of a complex variety $U$ into a Stein variety $W$ with a commutative diagram over a Stein variety $S$:
\begin{equation} 
\label{relativechart2}
\begin{array}{ccccl}
\vspace{0.2cm}
U\!\!& &\!\!\!\!\! \overset{j}{-\!\!\! \longrightarrow} \! \!\!\!\!& &\!\!  W  \\
\vspace{0.1cm}
\Phi_U\!\!\!\!\!& \searrow & & \swarrow &\!\!\!\! \Phi_W \\
&& {S} && 
\end{array}
\end{equation}
and a finite generator system ${\bf f}=\{ f_1,\cdots,f_{l}\} \subset \Gamma(W,\O_W)$ of the defining ideal $\I_U$ of the image subvariety $j(U)$ in $W$ (i.e.~ $\I_U:=\ker(j_*j^*|_{\O_W})=\sum_i \O_W f_i$).
\end{definition}

\bigskip
In this setting, for $p\in\Z_{\ge0}$, there is a natural epimorphism $\pi=j_*j^*|_{\Omega^p_{W/S}}$
\begin{equation}
\label{QI}
                 \Omega_{W/{S}}^p   \overset{\pi}{\longrightarrow } j_*(\Omega_{U/{S}}^p) \ (\simeq \Omega_{U/S}^p) \ \to \ 0, \footnote
{For a notational simplicity, we  shall sometimes confuse  the sheaf $\Omega_{U/S}^p$ on $U$ with its $j$-direct image $j_*(\Omega_{U/{S}}^p)$ on $W$. For instance, we shall write
$\Omega_{W/{S}}^p/ \Sigma_{i=1}^l (
   f_{i} \cdot    \Omega_{W/{S}}^p \! + \!
  df_{i} \wedge \Omega_{W/{S}}^{p-1} ) \simeq \Omega_{U/{S}}^p$.
}
\end{equation}
between the K\"ahler differentials,
whose kernel, depending only on $\I_U$, is given by
\vspace{-0.1cm}
$$
  \overset{l}{\underset{i=1}{\large
   \Sigma}}    f_{i} \cdot    \Omega_{W/{S}}^p  + 
 \overset{l}{\underset{i=1}{\Sigma}} 
  df_{i} \wedge \Omega_{W/{S}}^{p-1} .
$$

We want to construct $\O_{W}$-free resolution of this ideal generated by $f_{i}$ ($1\le i \le l$) and by $df_{i}$ ($1\le i \le l$). We answer this problem, up to the critical set $C_\Phi$, by introducing the {\it Koszul-De Rham-algebra} $(\K_{W/S,\bf f},d_{DR},\partial_\K)$.
\!\footnote
{Usually, Koszul resolution is defined for even elements $f_i$'s, but here we construct a resolution for odd elements $df_i$'s together. The interpretation to regard it as the Koszul resolution for the odd elements $df_i$'s and to introduce the variables $\eta_i$ was pointed out by M.\ Kapranov, to whom the author is grateful.
}

\bigskip
\begin{definition} 
The {\it Koszul-De Rham-algebra} associated with the based relative chart \eqref{relativechart2} is 
a sheaf of {\it bi-dg-algebra}s $\K_{W/S,{\bf f}}$ over the dg-algebra 
$ \Omega_{W/{S}}^\bullet$
on $W$  equipped with two (co-)boundary operators $\partial_\K,\ d_{DR}$ and with bi-degrees, describe below.
\end{definition}

Consider a  sheaf on $W$ of  {\it graded commutative algebra}s 
over the dg algebra 
$\Omega_{W/{S}}^\bullet$
 \begin{equation}
 \label{doubledg}
 \mathcal{K}_{W/S,{\bf f}}:= \Omega_{W/{S}}^\bullet \langle \xi_1,\cdots,\xi_{l}\rangle[ \eta_1, \cdots,\eta_{l} ]\ /\ \I
\end{equation}
generated by indeterminates $\xi_1,\cdots,\xi_{l}, \eta_1, \cdots,\eta_{l}$, where $\xi_i$'s (resp.\ $\eta_i$'s) are considered as graded commutative odd (resp.\ even) variables in the following sense.

\  i) $\eta_i$'s and even degree differential forms on $W$ are commuting with all variables,   
 
 ii) $\xi_i$'s  and odd degree differentials forms on $W$ are anti-commuting with each other, and $\I$ is the both sided ideal generated by 
\begin{equation}
\label{even.odd}
\begin{array}{cl}
\quad \xi_i \xi_j+\xi_j \xi_i=0 \quad \text{and}\quad   \xi_i  \omega+\omega \xi_i=0 \quad \ \text{for }\ 1\le i,j\le l \ \text{and}\ \omega\in \Omega^1_W.\!\!\!\!\!\!
  \end{array}
\end{equation}

\medskip
We equip the algebra $\K_{W/S,{\bf f}}$ with the following 3 structures.

\medskip
1. {\bf the Koszul structure}:  We define Koszul boundary operator $\partial_{\K}$ on $\K_{W/S,{\bf f}}$  as the $\Omega_{W/{S}}^\bullet$-endomorphism of the algebra  defined by the relations
$$ 
\partial_{\K} \xi_i =f_{i}, \ \ \partial_{\K} \eta_i =-df_i  \ \ \text{ and }\  \ \partial_{\K}1=0.
$$
They automatically satisfy the relation: $\partial_{\K}^2=0$.

\smallskip
\noindent
{\it Proof.} The endomorphism $\partial_{\K}$ is well defined on the free algebra generated by $\xi_i$'s and $\eta_i$'s.  
Then, one checks that the endomorphism preserves the ideal $\I$ generated by relations \eqref{even.odd} (since $\partial_\K(\xi_i \xi_j+\xi_j \xi_i)=f_i\xi_j-\xi_if_j+f_j\xi_i-\xi_jf_i=0$ and  $\partial_\K(  \xi_i  \omega+ \omega \xi_i)=f_i\omega-\omega f_i=0 $), and, hence, induces the action $\partial_\K$ on the quotient $\K_{W/S,{\bf f}}$.  The relation $\partial_\K^2=0$ follows immediately from the facts $\partial_\K^2\xi_i=\partial_\K f_i=0$ and $\partial_\K^2\eta_i=-\partial_\K df_i=0$.
\quad { $\Box$}

\medskip
2.\! {\bf De Rham structure}: We regard $\K_{W/S,{\bf f}}$ as De Rham complex of the Grassmann algebra $\O_{W}\!\langle\xi_1,\!\cdots\!,\xi_{l}\rangle$  where $\xi_i$'s satisfy the first half of the Grassmann relations \eqref{even.odd}. 
Then the De Rham differential operator,  denoted by $d_{DR}$, acting on $\K_{W/S,{\bf f}}$ is given as an extension of the classical De Rham operator $d_{W/S}$ on $\Omega^\bullet_{W/S}$ by setting
$$
 \begin{array}{lll}
  d_{DR}= d_{W/S} + \sum_{j=1}^{l} \eta_j \partial_{\xi_j},
\end{array}
$$ 
where $\partial_{\xi_j}$ is the derivation of the Grassmann algebra with respect to the variable $\xi_j$. 
One, first, defines this operator as an endomorphism of the free algebra before dividing by the ideal $\I$. Then one check directly that the endomorphism preserves the ideal $\I$ (since $d_{DR}(\xi_i \xi_j+\xi_j \xi_i)=\eta_i\xi_j-\xi_i\eta_j+\eta_j\xi_i-\xi_j\eta_i=0$ and $d_{DR}(\xi_i  \omega+\omega \xi_i)=\eta_i\omega-\omega\eta_i=0$) so that it induces the required one  acting on $\K_{W/S,{\bf f}}$.
The second term of $d_{DR}$ switches odd variables $\xi_i$ to even variables $\eta_i$. We see easily the property $(d_{DR})^2=0$ follows from 
$$
 d_{DR}(\xi_j)=\eta_j, \ d_{DR}(\eta_j)=0 \ \text{ and } \ d_{DR}^2(\O_{S} )=0.  
$$

\noindent
De Rham differential and Koszul differentials are anti-commuting  with each other
$$
\partial_\K d_{DR}+d_{DR}\partial_\K=0 
$$
(since $(\partial_\K d_{DR}+d_{DR}\partial_\K)\xi_i= \partial_\K\eta_i+d_{DR}f_i=-df_i+df_i=0$ and $(\partial_\K d_{DR}+d_{DR}\partial_\K)\eta_i=0+d_{W/S}(df_i)=0$) 
so that the pair $(d_{DR}, \partial_\K)$ form a double complex structure on $\K_{W/S,{\bf f}}$.\footnote{
That this construction of De Rham structure is the universal construction of dg-structure on $\K_{W/S,{\bf f}}$ extending that  on $\Omega_{W\!/\!{S}}^\bullet$ was pointed out by A.\!Voronov, to whom  the author is grateful.}

\medskip
	3. {\bf Bi-degree decomposition}:  We give an $\O_W$-direct sum decomposition 
$$
\K_{W/S,{\bf f}}:=\K_{W/S,{\bf f}}^{\bullet,\star}
:=\oplus_{p\in\Z}\oplus_{s\in\Z} \K_{W/S,{\bf f}}^{p,s}
\vspace{-0.2cm}
$$
such that

i)  \quad  $\K_{W/S,{\bf f}}^{p,0}=\Omega_{W/S}^p \ \ (p\in\Z)$ \quad and \quad $\K_{W/S,{\bf f}}^{p,s}=0$ for either $p<0$ or $s<0$.

ii) \ \  $\partial_\K: \mathcal{K}^{p,s}_{W/S} \to \mathcal{K}^{p,s-1}_{W/S} \ \ \text{and} \  \  d_{DR}:\mathcal{K}^{p,s}_{W/S} \to \mathcal{K}^{p+1,s}_{W/S}$ \ \ ($p,s\in\Z$).

\medskip
\noindent
In order to achieve this, we introduce De Rham degree and Koszul degree on $\K_{W/S,{\bf f}}$.  
Namely, for a monomial 
of the form $\omega\Xi\E$ where $\omega\in \Omega_{W/{S}}^p$ ($p\in\Z_{\ge0}$) and $\Xi$, $\E$ are monomials in $\xi_1,\cdots,\xi_l$ and $\eta_1,\cdots,\eta_l$, respectively, we define degree maps
$$
\begin{array}{rll}
\deg_{DR}(\omega\Xi\E) \!\! & :=\ \text{the total degree as a differential form} \! &=\ p+\deg(\E)\\
{\deg}_{\K}(\omega\Xi\E) \!\! & :=\ \text{the total degree of the monomial $\Xi\E$}\! &=\ \deg(\Xi)+\deg(\E)
\end{array},
$$
where one should note that $\xi_j$'s are Grasmann variables and $\deg(\Xi)$s are bounded by $l$, but $\eta_i$'s are even variables and $\deg(\E)$s are un-bounded. If some monomials have the same degree with respect to $\deg_{DR}$ and/or ${\deg}_{\K}$, then we also call the sum of them homogeneous of the same degree with respect to $\deg_{DR}$ and/or ${\deg}_{\K}$.

The degree maps are additive with respect to the product in $\Omega_{W/{S}}^\bullet \langle \xi_1,\cdots,\xi_{l}\rangle [ \eta_1, \cdots,\eta_{l}] $. 
Since the ideal $\I$ is generated by bi-homogeneous elements \eqref{even.odd}, the bi-degrees 
$\deg_{DR}$ and $\deg_{\K}$ are induced on the quotient algebra $\K_{W/S,{\bf f}}$ \eqref{doubledg}. 
So, we set 
$$
\K_{W/S,{\bf f}}^{p,s}:=\{\omega\in \K_{W/S,{\bf f}}\mid \omega \text{ is bi-homogeneous with } (\deg_{DR},\deg_{\K})=(p,s)\}.
$$
for $p,s\in \Z$, where $\K_{W/S,{\bf f}}^{p,s}=0$ if either $p$ or $s$ is negative. Each graded piece $\K_{W/S,{\bf f}}^{p,s}$ is an $\O_W$-coherent module (since it is isomorphic to a direct sum of some $\Omega_{W/S}^\bullet$).

\bigskip
This completes the definition of Koszul-De Rham algebra. 
For short, we sometimes call the combination of these 3 structures the {\it bi-dg-algebra} structure.
In the following sections A), B), C), D) and E), we describe some basic properties and applications of Koszul-De Rham algebras.

\medskip
\noindent
{\bf A) Functoriality.}  

\noindent
The {\it functoriality of {\small Koszul-De Rham} algebra}, formulated in the following Lemma, shall play a key role in {\small Step 4} when we construct the lifted {\small $\Check{\text{C}}$ech} coboundary operator $\Check{\delta}$ (see \eqref{sign1}). 

\medskip
\begin{lem} 
\label{functorial}
{\it A morphism ${\bf w} : (j':U'\to W', {\bf f'}) \to (j:U\to W,{\bf f})$ between two based charts over the  same  base space $S$ is a pair $(w, h)$ of  1) a holomorphic map $w: W' \to W$ over $S$, whose restriction $u:=w\mid_{U'}$ induces a holomorphic map $U'\to U$ so that we have the commutative diagram:
\vspace{-0.2cm}
\[
  \begin{array}{rcl}
  \vspace{0.1cm}
  U' \!\! &-\!\!-\!\! \overset{j'}{-}\!\!\! {\longrightarrow}  \!&\! W'\\ 
|   & \searrow \quad \swarrow &\  | \\
 \mid u \!\!\!\!\! &  S & \ \mid w \\
  \downarrow  & \nearrow \quad \nwarrow  &\ \!\downarrow
  \vspace{-0.15cm} \\
  U \!  &-\!\!-\!\!\overset{j}{-}\!\!\longrightarrow  &  W  \quad,
  \end{array}
  \]
and a matrix $\{h_i^k\}_{i=1,\cdots,l}^{k=1,\cdots,l'}$ with coefficients in $\Gamma(W',\O_{W'})$ such that $w^*(f_i)=\sum_{k=1}^{l'} h_i^k f_k'$.
Then, the correspondence
\begin{equation}
\label{functorial2}
{\bf w}^\diamond (\xi_i):=\sum_k h_i^k \xi_k'   \quad \text{and}\quad 
{\bf w}^\diamond(\eta_i):=\sum_k h_i^k \eta_k'  + \sum_k dh_i^k \xi_k'.
\end{equation}
induces a bi-dg-algebra morphism 
\begin{equation}
\label{functorial3}
{\bf w}^\diamond\ : \quad  w^{*}\K_{W/S,{\bf f}}\ \longrightarrow \ \K_{W'/S,\bf f'}
\end{equation}
over the dg-algebra morphism $w^*: w^{*}\Omega_{W/S}^\bullet\! \to\! \Omega_{W'\!/S}^\bullet$.  The morphism is functorial in the sense that for a composition $\bf{w_1\circ w_2}$  of morphisms, we have
$$
(\bf{w_1\circ w_2})^\diamond \quad = \quad {\bf w_2}^\diamond \circ {\bf w_1}^\diamond
$$
}
\end{lem}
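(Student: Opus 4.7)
Plan: I would first define ${\bf w}^\diamond$ as a morphism of (pre-quotient) bi-graded algebras by specifying its action on generators: send a form $\omega\in\Omega^\bullet_{W/S}$ to $w^*\omega\in\Omega^\bullet_{W'/S}$, and set ${\bf w}^\diamond(\xi_i)$ and ${\bf w}^\diamond(\eta_i)$ as in \eqref{functorial2}; then extend multiplicatively. Bi-degrees are preserved, since $\sum_k h_i^k\xi_k'$ sits in bi-degree $(0,1)$ like $\xi_i$ and $\sum_k(h_i^k\eta_k'+dh_i^k\xi_k')$ sits in bi-degree $(1,1)$ like $\eta_i$. To descend to the quotient \eqref{doubledg}, I must check that the generators of $\I$ listed in \eqref{even.odd} land in the corresponding ideal of $\K_{W'/S,{\bf f'}}$; this is automatic, because the $\xi_k'$'s already satisfy those anti-commutation relations both among themselves and against every $1$-form in $\Omega^1_{W'/S}$, and $w^*(\Omega^1_{W/S})\subset\Omega^1_{W'/S}$.

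The substance of the lemma is compatibility with the two differentials. Since $\partial_\K$, $d_{DR}$ and ${\bf w}^\diamond$ are all (super-)derivations, it is enough to verify the commutations on the generators $\xi_i,\eta_i$; the case of forms is trivial for $\partial_\K$ and is the usual functoriality of $w^*$ for $d_{DR}$. For $\partial_\K$ the defining identity $w^*(f_i)=\sum_k h_i^k f_k'$ directly gives $\partial_\K{\bf w}^\diamond(\xi_i)=\sum_k h_i^k f_k'=w^*(f_i)={\bf w}^\diamond\partial_\K(\xi_i)$, while on $\eta_i$ the Leibniz rule for $d$ yields
\[
\partial_\K{\bf w}^\diamond(\eta_i)\ =\ -\sum_k h_i^k df_k'-\sum_k dh_i^k f_k'\ =\ -d\!\left(\sum_k h_i^k f_k'\right)\ =\ -w^*(df_i)\ =\ {\bf w}^\diamond\partial_\K(\eta_i).
\]
For $d_{DR}$, applying Leibniz to ${\bf w}^\diamond(\xi_i)=\sum_k h_i^k\xi_k'$ returns exactly $\sum_k(dh_i^k\xi_k'+h_i^k\eta_k')={\bf w}^\diamond(\eta_i)={\bf w}^\diamond d_{DR}(\xi_i)$---this is precisely why the correction term $\sum_k dh_i^k\xi_k'$ is forced to appear in \eqref{functorial2}. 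On $\eta_i$, the two contributions coming from $d_{W/S}(h_i^k\eta_k')$ and $d_{DR}(dh_i^k\xi_k')$ cancel by $d\circ d=0$ and $d_{DR}(\eta_k')=0$, so $d_{DR}{\bf w}^\diamond(\eta_i)=0={\bf w}^\diamond d_{DR}(\eta_i)$.

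For functoriality, with ${\bf w_2}:(j'',{\bf f''})\to(j',{\bf f'})$ and ${\bf w_1}:(j',{\bf f'})\to(j,{\bf f})$, expanding $w_2^*w_1^*(f_i)=\sum_m\bigl(\sum_k w_2^*(h_{1,i}^k)\,h_{2,k}^m\bigr)f_m''$ identifies the composition matrix as $h_i^m=\sum_k w_2^*(h_{1,i}^k)h_{2,k}^m$. On $\xi_i$ both $({\bf w_1\circ w_2})^\diamond$ and ${\bf w_2}^\diamond\circ{\bf w_1}^\diamond$ visibly yield $\sum_m h_i^m\xi_m''$. On $\eta_i$, expanding ${\bf w_2}^\diamond\bigl({\bf w_1}^\diamond(\eta_i)\bigr)$ and regrouping via $w_2^*\circ d=d\circ w_2^*$ and the Leibniz identity $d(w_2^*(h_{1,i}^k)h_{2,k}^m)=d(w_2^*h_{1,i}^k)\cdot h_{2,k}^m+w_2^*(h_{1,i}^k)\,dh_{2,k}^m$ produces $\sum_m h_i^m\eta_m''+\sum_m dh_i^m\xi_m''=({\bf w_1\circ w_2})^\diamond(\eta_i)$.

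The only real obstacle is the bookkeeping centered on the correction term $\sum_k dh_i^k\xi_k'$ in ${\bf w}^\diamond(\eta_i)$: the same term is simultaneously dictated by $d_{DR}$-compatibility, cooperates with the Leibniz identity $d(h_i^k f_k')=dh_i^k f_k'+h_i^k df_k'$ to secure $\partial_\K$-compatibility, and supplies the chain-rule cancellation on $\eta_i$ needed for the composition law. That a single prescription \eqref{functorial2} meets all three requirements simultaneously is essentially the content of the lemma, and amounts to a careful, purely formal application of the Leibniz rule in the bi-graded setting.
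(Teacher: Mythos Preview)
Your proof is correct and follows essentially the same approach as the paper: define ${\bf w}^\diamond$ on generators, check that the ideal $\I$ is preserved and bi-degrees are respected, then verify compatibility with $\partial_\K$ and $d_{DR}$ and the composition law by direct computation on $\xi_i$ and $\eta_i$. Your treatment of signs is in fact more careful than the paper's (e.g.\ you correctly track the minus sign from $\partial_\K\eta_i=-df_i$ and the odd Leibniz rule on $dh_i^k\,\xi_k'$), and your closing remark explaining why the correction term $\sum_k dh_i^k\xi_k'$ is forced is a nice conceptual addition; one small slip is calling ${\bf w}^\diamond$ a ``(super-)derivation'' --- it is an algebra homomorphism, and the reduction to generators works because the failure of commutation with a derivation is itself a derivation.
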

\begin{proof} 
The correspondence \eqref{functorial} induces a morphism between free agleblas before dividing by the ideal $\I$, which preserves the parities of the variabls and matches with the degree counting by $\deg_{DR}$ and $\deg_\K$. We have the correspondences
$$\xi_i\xi_j+\xi_j\xi_i \quad \mapsto \quad
\sum_{k,k} h_i^kh_j^l\ \big(\xi_k'\xi_l'+\xi_l'\xi_k'\big)
\vspace{-0.3cm}
$$ 
and 
$$
\xi_i\omega+\omega\xi_i \quad \mapsto \quad \sum_k h_i^k\ \big(\xi_k' w^*(\omega) +w^*(\omega) \xi_k'\big)
$$
so that the defining ideal $\I$ is preserved and the bi-graded algebra homomorphism ${\bf w}^\diamond$ \eqref{functorial3} is well-defined.

\noindent
The commutativity of ${\bf w}^\diamond$ with $\partial_{\K}$:
$$
\begin{array}{rcl}
{\bf w}^\diamond \partial_\K (\xi_i) &=&{\bf w}^\diamond(f_i)=\sum_k h_i^k f_k'=\sum_k h_i^k \partial_\K \xi_k'=\partial_\K(\sum_k h_i^k \xi_k')=\partial_K {\bf w}^\diamond (\xi_i),
\end{array}
$$
$$
\begin{array}{rcl}
{\bf w}^\diamond \partial_\K (\eta_i) &=&{\bf w}^\diamond(df_i)
=\sum_k d(h_i^k f_k') \\
&=& \sum_k( h_i^k df_k'+dh_i^k  f_k') 
= \sum_k \partial_\K\sum_k(h_i^k \eta_k'+ dh_i^k \xi_k')=\partial_K {\bf w}^\diamond (\eta_i).
\end{array}
$$
The commutativity of ${\bf w}^\diamond$ with $d_{DR}$:
$$
\begin{array}{rcl}
{\bf w}^\diamond d_{DR}(\xi_i) \!&\!=\!&\!{\bf w}^\diamond( \eta_i)=\sum_k w_i^k \eta_k' \! +\! \sum_k dw_i^k \xi_k'=d_{DR}(\sum_k w_i^k \xi_k' )=d_{DR}({\bf w}^\diamond(\xi_i)),
\end{array}
$$
$$
\begin{array}{rcl}
{\bf w}^\diamond d_{DR}(\eta_i)=0,  \!\!\!&\!\!\!\!\!&\!\!\!
d_{DR}{\bf w}^\diamond (\eta_i) =d_{DR}(\sum_k \! ( w_i^k \eta_k' \! +\! dw_i^k \xi_k')
=\sum_k\! ( dw_i^k \eta_k' \! - \! dw_i^k \eta_k')=0.
\end{array}
$$
The functoriality of ${\bf w}^\diamond$: consider a composition 
$(w,h)=(w_1,h_1)\circ(w_2,h_2)$ of two morphisms, where $w_1: W_2\to W_1$ and $w_2:W_3\to W_2)$ and $w_1^*(f_{i,1})=\sum_kh_{i,1}^kf_{k,2}$, $w_2^*(f_{k,2})=\sum_l h_{k,2}^l f_{l,3}$. So, $w=w_1\circ w_2$ and $h_{i}^l=\sum_k w_2^*(h_{i,1}^k)h_{k,2}^l$. Then, obviously,
$$
\begin{array}{rcl}
{\bf w}^\diamond(\xi_{i,1}) \!&=&\! \sum_l\big((\sum_k w_2^*(h_{i,1}^k)h_{k,2}^l)\xi_{l,3}\big)=
\sum_k w_2^*(h_{i,1}^k\xi_{k,2})  = w_2^\diamond(w_1^\diamond(\xi_{i,1})).
\end{array}
$$
$$
\begin{array}{rcl}
{\bf w}^\diamond(\eta_{i,1}) & = & \sum_l 
\Big(
\big(\sum_k 
 w_2^*(h_{i,1}^k)h_{k,2}^l \big) \eta_{l,2}
 +  d\big(\sum_k w_2^*(h_{i,1}^k)h_{k,2}^l \big) \xi_{l,2}
\Big ) \\
&=& \sum_l 
\sum_k 
 w_2^*(h_{i,1}^k) \big(h_{k,2}^l \eta_{l,2}
 +  dh_{k,2}^l \xi_{l,2} \big) +
 \sum_kd( w_2^*(h_{i,1}^k)) h_{k,2}^l) \xi_{l,2}\big) 
\big ) \\
&=& w_2^\diamond(\sum_k h_{i,1}^k \eta_{k,2}  + \sum_k dh_{i,1}^k \xi_{k,2} )
=w_2^\diamond(w_1^\diamond(\eta_{i,1})).
\end{array}
$$
\end{proof}

\noindent
{\bf B) Comparison $\pi$ with the De Rham complex  $(\Omega_{U/S}^\bullet,d_{U/S})$.}

We compare the dg-algebras $(\Omega_{U/S}^\bullet, d_{U/S})$ and $(\K_{W/S,{\bf f}}^{\bullet,\star},d_{DR},\partial_\K)$.
We summarize and reformulate well-known facts in terms of $\deg_\K$, $\partial_\K$ and $d_{DR}$ as follows.  

\medskip
\noindent
\begin{lem} 
\label{pi-comparison}
 {\it The  morphism $\pi$  \eqref{QI} satisfies the following properties.}

\noindent
1.  {\it The morphism $\pi$ induces an exact sequence:  }
  \begin{equation}
  \label{0th-cohomology}
  \mathcal{K}_{W/S,\bf f}^{\bullet,1}\  \overset{\partial_\K}{\longrightarrow} \    \mathcal{K}_{W/S,\bf f}^{\bullet,0}  \ \overset{\pi}{\longrightarrow} \    \Omega_{U/{S}}^\bullet \longrightarrow 0.  
\end{equation}

\noindent
2. {\it The morphism $\pi$ commutes with De Rham differentials:}
 \begin{equation}
 \label{pi-deRham}
\begin{array} {rcl}
\vspace{0.1cm}
 \mathcal{K}_{W/S,\bf f}^{\bullet,0}  & \overset{\pi}{\longrightarrow} &    \Omega_{U/{S}}^\bullet \\
\vspace{0.1cm}
 d_{DR} \downarrow\quad\  &  &\quad  \downarrow d_{U/S} \\
 \mathcal{K}_{W/S,\bf f}^{\bullet +1,0}  & \overset{\pi}{\longrightarrow} &    \Omega_{U/{S}}^{\bullet+1}
 \end{array}
 \end{equation}
  
 \noindent
 3.  {\it If there is a morphism ${\bf w}: (j',{\bf f}')\to (j, \bf f)$ between two based relative charts, then the morphism $\pi$ gives natural transformation between the functors ${\bf w}^\diamond$ and $u^*$. }
 \begin{equation}
 \label{pi-natural}
\begin{array} {rcl}
\vspace{0.1cm}
 w^* \mathcal{K}_{W/S,\bf f}^{\bullet,0}  & \overset{\pi}{\longrightarrow} & \!  u^* \Omega_{U/{S}}^\bullet \\
\vspace{0.1cm}
 w^\diamond \downarrow\quad\  &  &\  \downarrow u^* \\
 \mathcal{K}_{W'/S,{\bf f}'}^{\bullet ,0}  & \overset{\pi'}{\longrightarrow} &    \Omega_{U'/{S}}^{\bullet}
 \end{array}
 \end{equation}
 \end{lem}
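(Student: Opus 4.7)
The plan is to handle the three parts in order, relying only on the definition of the Koszul-De Rham algebra together with the bi-degree decomposition, and on the fact that $j:U\to W$ is a closed embedding with defining ideal $\I_U=\sum\O_W f_i$.

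For part (1), the key point is the identification $\mathcal{K}^{\bullet,0}_{W/S,\bf f}=\Omega^\bullet_{W/S}$ (by definition, this is the bi-degree $\star=0$ component, which contains no $\xi_i$ or $\eta_i$), together with the $\O_W$-module decomposition of the bi-degree $(p,1)$ component into pieces $\Omega^p_{W/S}\cdot\xi_i$ and $\Omega^{p-1}_{W/S}\cdot\eta_i$ (note that $\eta_i$ carries De Rham degree $1$, so $\omega\cdot\eta_i$ with $\omega\in\Omega^{p-1}_{W/S}$ lands in bi-degree $(p,1)$). Applying $\partial_\K$ via $\xi_i\mapsto f_i$ and $\eta_i\mapsto -df_i$ produces precisely the submodule $\sum_i f_i\Omega^p_{W/S}+\sum_i df_i\wedge\Omega^{p-1}_{W/S}$ of $\Omega^p_{W/S}$, which by the description recalled just after \eqref{QI} equals $\ker(\pi)$ in degree $p$. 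Surjectivity of $\pi$ is standard for closed embeddings. The signs in $\partial_\K\eta_i=-df_i$ have no effect on the image as a submodule.

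For part (2), on $\mathcal{K}^{\bullet,0}_{W/S,\bf f}$ the second summand $\sum_j \eta_j\partial_{\xi_j}$ of $d_{DR}$ vanishes identically (there are no $\xi$-factors to differentiate), so $d_{DR}$ restricts to the relative De Rham differential $d_{W/S}$ on $\Omega^\bullet_{W/S}$. The claim then reduces to the assertion that $\pi=j_*j^*$ intertwines $d_{W/S}$ with $d_{U/S}$, which is the functoriality of the relative K\"ahler/de Rham differentials under the closed embedding $j$ over the base $S$.

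For part (3), again on $\mathcal{K}^{\bullet,0}$ the bi-dg-algebra map ${\bf w}^\diamond$ reduces to the ordinary pullback $w^*:\Omega^\bullet_{W/S}\to\Omega^\bullet_{W'/S}$, because the formulas \eqref{functorial2} only prescribe the action on $\xi_i$ and $\eta_i$ and leave $w^*$ intact on form factors. Hence the square in \eqref{pi-natural} becomes the equality $\pi'\circ w^*=u^*\circ\pi$, which follows from the commutative diagram $j\circ u=w\circ j'$ imposed on a morphism of based charts.

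I do not expect any part to constitute a substantive obstacle: the lemma is essentially a bookkeeping verification that the definitions of $\partial_\K$, $d_{DR}$, and ${\bf w}^\diamond$ were set up so that the evident augmentation $\pi$ lifts the standard resolution of $\O_U$ by Koszul on $\{f_i\}$ to an augmentation of the relative De Rham complex. If any step demands care, it is (1), where one should carefully separate the contributions of the ``even'' variables $\eta_i$ and the ``odd'' variables $\xi_i$ to the Koszul-degree-one piece and check that together they precisely generate the kernel of $\pi$.
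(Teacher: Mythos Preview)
Your proposal is correct and follows essentially the same route as the paper: the paper's proof simply lists the four verifications that (i) $(\mathcal{K}^{\bullet,0}_{W/S,\bf f},d_{DR})=(\Omega^\bullet_{W/S},d_{W/S})$, (ii) $\partial_\K(\mathcal{K}^{\bullet,1}_{W/S,\bf f})$ is the ideal generated by the $f_i$ and $df_i$, (iii) $d_{U/S}$ is induced from $d_{W/S}$, and (iv) ${\bf w}^\diamond$ on Koszul degree~$0$ is $w^*$. Your write-up is slightly more explicit (e.g.\ spelling out the $\Omega^p\xi_i\oplus\Omega^{p-1}\eta_i$ decomposition and why $\sum_j\eta_j\partial_{\xi_j}$ vanishes on degree~$0$), but the content is identical.
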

 \begin{proof}  We have only to check the following facts.

\smallskip
\noindent
1) The complex $(K^{\bullet,0}_{W/S,{\bf f}},d_{DR})$ coincides with the De Rham complex $(\Omega^\bullet_{W/S},d_{W/S})$.

\noindent
2) The image $\partial_\K(\K^{\bullet,1}_{W/S,\bf f})$ in $\K^{\bullet,0}_{W/S, \bf f}$  of the $\deg_\K=1$ part of the algebra is equal 

to the ideal generated by $f_1,\cdots,f_{l}$ and $df_1,\cdots,df_{l}$ in $(\Omega_{W/{S}}^\bullet , d_{W/{S}})$ so that 

as $\O_U$-module, they are isomorphic.

\noindent
3)  The relative De Rham differential $d_{U/S}$ on $U$ is coincides with the one induced 

from the relative De Rham differential $d_{W/S}$ on $W$.

\noindent
4) The morphism $w^\diamond$ on the  $\deg_\K=0$ part of Koszul-De Rham algebra coincides 

with the pull-back morphism $w^*$ of differential forms.
\end{proof}

The first and the second properties of {\it Lemma} \ref{pi-comparison} means that the morphism $\pi$  induces a quasi equivalence of the  Koszul-De Rham double complex with the De Rham complex,
and the third property 3. means the naturallity of $\pi$, which shall be used, in the next section, when we compare the $\Check{\text{C}}$ech-triple complex with coefficients in $\mathcal{K}_{W/S,\bf f}^{\bullet,\star}$ with the $\Check{\text{C}}$ech-double complex with coefficients in $\Omega_{U/{S}}^\bullet$.

\bigskip
\noindent
{\bf C)} {\bf Boundedness of the Koszul-De Rham algebra.}

We discuss a certain {\it boundedness} of the Koszul-De Rham complexes, which is crucially used in the study of triple complex in the next section 5. 

Since there are no relations mixing  $\xi$ and $\eta$,  by definition, the bi-degree  $(p,s)$ term of the Koszul-De Rham algebra has the following direct sum decomposition.
\begin{equation}
\label{p-s-term}
\K_{W/S,{\bf f}}^{p,s}=\overset{\min\{p,s\}}{\underset{a=0}{\oplus}} \!\!\!
\ \
\underset{\substack{
\Xi \text{ is a monomimal in} \\
\text{$\xi_i$'s  of }  
\deg (\Xi)=s-a.
}}{\oplus} 
\ \
\underset{\substack{
\E \text{ is a monomial in}\\
\text{$\eta_i$'s of } 
  \deg (\E)=a.
}}\oplus
\ \
 \Omega^{p-a}_{W/S}\ \Xi\ \E.
\end{equation}

\medskip
\begin{lem}
\label{bound1}
{\it The set $\{(p,s)\in \Z^2\mid \mathcal{K}_{W/S,{\bf f}}^{p,s}\not=0\}$ is contained in the strip }
\begin{equation}
\label{bound1}
\{(p,s) \in \Z^2 \mid   -l\le p-s\le \dim_\C W \}
\end{equation} 
\end{lem}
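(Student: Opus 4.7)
The plan is to read the strip bounds directly off the direct sum decomposition \eqref{p-s-term} placed immediately before the lemma. Suppose $\mathcal{K}^{p,s}_{W/S,{\bf f}}\neq 0$; then at least one of its summands $\Omega^{p-a}_{W/S}\,\Xi\,\E$ is nonzero for some integer $a$ with $0\le a\le\min\{p,s\}$, some $\xi$-monomial $\Xi$ of degree $s-a$, and some $\eta$-monomial $\E$ of degree $a$. For this single summand to be nonzero all three of its tensor factors must be simultaneously nonzero.

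I would then apply the three constraints coming from each factor separately. Since $\xi_1,\dots,\xi_{l}$ are odd Grassmann generators, existence of a monomial $\Xi$ of degree $s-a$ forces $0\le s-a\le l$; since the $\eta_j$ are even (polynomial) generators, existence of a monomial $\E$ of degree $a$ only forces $a\ge 0$ with no upper bound; and $\Omega^{p-a}_{W/S}\neq 0$ forces $0\le p-a\le \dim_\C W$, because any differential form on the complex manifold $W$ of degree exceeding $\dim_\C W$ vanishes (and the relative forms are a quotient of the absolute ones).

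Using the identity $p-s=(p-a)-(s-a)$, the upper bound follows from $p-a\le \dim_\C W$ together with $s-a\ge 0$, giving $p-s\le \dim_\C W$; the lower bound follows from $p-a\ge 0$ together with $s-a\le l$, giving $p-s\ge -l$. Thus every non-vanishing bi-degree $(p,s)$ lies inside the prescribed strip.

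There is no genuine obstacle here; the claim is an immediate combinatorial consequence of the decomposition \eqref{p-s-term}. The only structural point worth emphasising is the origin of the asymmetry between the two bounds: the finite-dimensional Grassmann algebra in the odd generators $\xi_1,\dots,\xi_{l}$ supplies the lower bound $-l$, while the top degree of the (relative) de Rham complex on $W$ supplies the upper bound $\dim_\C W$, and the even generators $\eta_j$, being unbounded in degree, contribute no constraint of their own.
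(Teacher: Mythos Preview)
Your argument is correct and essentially identical to the paper's: both pick a nonzero monomial $\omega\,\Xi\,\E$ and observe that $p-s=\deg(\omega)-\deg(\Xi)$ with $0\le\deg(\omega)\le\dim_\C W$ and $0\le\deg(\Xi)\le l$, which is exactly your computation $p-s=(p-a)-(s-a)$ written in the notation of \eqref{p-s-term}. The paper simply states this in one line without isolating the auxiliary index $a$; your version spells out the same reasoning in more detail.
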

\begin{proof} Suppose that there exists a nontrivial element $\K^{p,s}_{W/S,\bf f}\ni \omega\Xi\E\not=0$. Then, 
$p-s= \deg(\omega)-\deg(\Xi)$ (recall the definition of bi-degrees), where  $0\le \deg(\omega)\le \dim_\C W$ and $0\le \deg(\Xi)\le l$.    This gives the bound in the formula.  
\end{proof}

\begin{rem} 
\label{bound2} {\it Lemma} implies that the total Koszul-De Rham complex $\K^{\tilde \bullet}_{W/S,{\bf f}}$ is bounded. 
However each term $\underset{p-s=\tilde\bullet}{\oplus} \K_{W/S,{\bf f}}^{p,s}$
of the total complex  is an infinite sum, since $\eta_i$'s are even variables and the multiplication of any high power of them are non-vanishing and increases simultaneously the degrees $p$ and $s$. Nevertheless, such simple repetition of same terms (in stable area) seems harmless as we shall see, in  the next Step 4, that by taking 
a lifting of $\Check{\text{C}}$ech cohomology groups with coefficients in Koszul-De Rham algebras,  they can be truncated in \eqref{truncation}. 
\end{rem}

\medskip
\noindent
{\bf D) $\partial_\K$-cohomology group of Koszul-De Rham algebra.}

 For each fixed $p\in \Z_{\ge 0}$,  we study the cohomology of the bounded complex: 
\begin{equation}
\label{resolution}
0 \to  \mathcal{K}_{W/S,\bf f}^{p,p+l}    \!\overset{\partial_\K}{\longrightarrow} \cdots  \!\overset{\partial_\K}{\longrightarrow}   \mathcal{K}_{W/S,\bf f}^{p,3}   \!\overset{\partial_\K}{\longrightarrow}   \mathcal{K}_{W/S,\bf f}^{p,2}  \!\overset{\partial_\K}{\longrightarrow}    \mathcal{K}_{W/S,\bf f}^{p,1}   \!\overset{\partial_\K}{\longrightarrow}     \mathcal{K}_{W/S,\bf f}^{p,0} \to  0 .
 \end{equation} 
Let us  first fix a notation: for $p,s\in \Z$, set 
\begin{equation}
\label{Hps}
\mathcal{H}^{p,s}_{W/S,{\bf f}}:=\Ker\big(\partial_\K:\mathcal{K}^{p,s}_{W/S,{\bf f}} \to \mathcal{K}^{p,s-1}_{W/S,{\bf f}}\big)\ \big/\ \partial_\K \big( \mathcal{K}^{p,s+1}_{W/S,{\bf f}}\big),
\end{equation}  
and call it {\it Koszul-cohomology}, or $\partial_\K${\it -cohomology}. 

We first  recall some functorial properties of them. 

\bigskip
\begin{lem}
\label{Koszul-cohomology}

i) {\it The De Rham operator $d_{DR}$ on the Koszul-De Rham algebra induces
$$
d_{DR}\ : \quad 
\mathcal{H}^{p,s}_{W/S,{\bf f}} \ \longrightarrow \ \mathcal{H}^{p+1,s}_{W/S,{\bf f}}
$$ 
such that $d_{DR}^2=0$, which we shall call the De Rham operator on $\partial_\K$-cohomology.}

\smallskip 
ii) {\it Let ${\bf w}=(w,h): (j',{\bf f}')\to (j,{\bf f})$ be a morphism between based relative charts, and set $u:=w|_U' :U'\to U$. Then, the morphism $\bf w^\diamond$ \eqref{functorial3} induces 
 a morphism
 \[
 u^\diamond \ :\quad \mathcal{H}^{p,s}_{W/S,{\bf f}} \ \longrightarrow \ \mathcal{H}^{p,s}_{W'/S',{\bf f}'}
 \]
 which commutes with the De Rham operators on $\partial_\K$-cohomologies, and has the functorial property:  $(u_1\circ u_2)^\diamond= u_2^\diamond\circ u_1^\diamond$.}
 \end{lem}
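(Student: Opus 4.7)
The plan is to deduce both claims formally from the differential graded identities already built into the Koszul--De Rham algebra together with \emph{Lemma} \ref{functorial}; no new analysis seems to be required.

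For part (i), the key input is the anti-commutation $\partial_\K d_{DR} + d_{DR}\partial_\K = 0$ established in the construction of the bi-dg-algebra. First I would verify that $d_{DR}$ preserves $\partial_\K$-cycles: if $\partial_\K \omega = 0$ then $\partial_\K d_{DR}\omega = -d_{DR}\partial_\K\omega = 0$. Next, $d_{DR}$ preserves $\partial_\K$-boundaries: if $\omega = \partial_\K \tau$, then $d_{DR}\omega = -\partial_\K d_{DR}\tau \in \Im(\partial_\K)$. Hence $d_{DR}$ descends to the quotient $\mathcal{H}^{p,s}_{W/S,{\bf f}}$. The bi-degree decomposition \eqref{p-s-term} guarantees that $d_{DR}$ raises $\deg_{DR}$ by $1$ while fixing $\deg_\K$: the summand $d_{W/S}$ acts only on the differential-form factor, and the summand $\sum_j \eta_j\partial_{\xi_j}$ trades a $\xi_j$ for an $\eta_j$, leaving $\deg_\K=\deg(\Xi)+\deg(\E)$ unchanged. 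The identity $d_{DR}^2 = 0$ on cohomology is then inherited directly from the chain-level identity $d_{DR}^2 = 0$.

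For part (ii), the morphism ${\bf w}^\diamond : w^*\mathcal{K}_{W/S,{\bf f}} \to \mathcal{K}_{W'/S,{\bf f}'}$ is, by \emph{Lemma} \ref{functorial}, a morphism of bi-dg-algebras that preserves bi-degree and commutes with both $\partial_\K$ and $d_{DR}$. I would argue: commutation with $\partial_\K$ sends cycles to cycles and boundaries to boundaries, so ${\bf w}^\diamond$ descends to an induced map on $\partial_\K$-cohomology, which I shall call $u^\diamond$; commutation with $d_{DR}$ at the chain level yields commutation of $u^\diamond$ with the induced De Rham operator of part (i); and the composition law $({\bf w}_1 \circ {\bf w}_2)^\diamond = {\bf w}_2^\diamond \circ {\bf w}_1^\diamond$ of \emph{Lemma} \ref{functorial} passes through to cohomology verbatim, giving $(u_1 \circ u_2)^\diamond = u_2^\diamond \circ u_1^\diamond$.

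I do not anticipate a real obstacle: the whole statement is a formal consequence of $\partial_\K^2 = 0$, $d_{DR}^2 = 0$, the anti-commutation $\partial_\K d_{DR} + d_{DR}\partial_\K = 0$, and the chain-level compatibilities of \emph{Lemma} \ref{functorial}. The only point asking for a sliver of care is the book-keeping of bi-degrees in part (i), which is immediate from the defining formula for $d_{DR}$ and the decomposition \eqref{p-s-term}.
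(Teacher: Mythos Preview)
Your proposal is correct and follows essentially the same approach as the paper: the paper's proof simply states that both claims are immediate consequences of the double-complex structure $(\partial_\K, d_{DR})$ on $\K^{\bullet,\star}_{W/S,\bf f}$ and of the bi-dg-algebra homomorphism property of ${\bf w}^\diamond$ from \emph{Lemma}~\ref{functorial}. You have spelled out the standard verification (cycles to cycles, boundaries to boundaries, bi-degree bookkeeping, and passage of the composition law to cohomology) that the paper leaves implicit.
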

\begin{proof} All these facts are immediate consequences of the fact that $\K^{\bullet,\star}_{W/S,\bf f}$ is a double complex with respect to $\partial_K$ and $d_{DR}$ shown in \S4, and the fact that $\bf w^\diamond$ is a bi-dg-algebra homomorphism from $\K^{\bullet,\star}_{W/S,\bf f}$ to $\K^{\bullet,\star}_{W'/S,\bf f'}$ ({\it Lemma} \ref{functorial}).
\end{proof}

Note that the $\partial_K$-cohomology groups are $\O_W$-coherent  modules, since  the modules $\mathcal{K}^{p,s}_{W/S,{\bf f}}$ are $\O_W$-coherent and the $\partial_\K$ are $\O_W$-homomorphisms (\eqref{p-s-term}). We analyze the $\partial_\K$-cohomologies in that context. The first basic fact is that they are  defined on $U$.

\bigskip
\begin{lem} 
\label{coherent}
{\it The $\partial_\K$-cohomology $\mathcal{H}^{p,s}_{W/S,{\bf f}}$ ($p,s\in \Z$) is an $\O_U$-coherent module.}
\end{lem}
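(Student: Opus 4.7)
My plan is to split the claim into two parts: (a) $\mathcal{H}^{p,s}_{W/S,{\bf f}}$ is $\O_W$-coherent, and (b) it is annihilated by the defining ideal $\I_U=(f_1,\ldots,f_l)$ of $U\hookrightarrow W$. Together these give $\O_U$-coherence, by the standard fact that a coherent $\O_W$-module killed by $\I_U$ is the same as a coherent $\O_W/\I_U$-module, i.e.\ a coherent $j_*\O_U$-module supported on $j(U)$.

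For part (a), I would simply read off the decomposition \eqref{p-s-term}: at fixed $(p,s)$ the outer sum is finite ($0\le a\le \min\{p,s\}$), the inner sum over Grassmann monomials $\Xi$ in $\xi_1,\ldots,\xi_l$ of degree $s-a$ is finite (since $\deg\Xi\le l$), and the sum over commuting monomials $\E$ of fixed degree $a$ in the finitely many $\eta_i$'s is also finite. Hence each $\K^{p,s}_{W/S,{\bf f}}$ is a finite direct sum of the $\O_W$-coherent sheaves $\Omega^{p-a}_{W/S}$, and since $\partial_\K$ is $\O_W$-linear its subquotient $\mathcal{H}^{p,s}_{W/S,{\bf f}}$ inherits $\O_W$-coherence.

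The substantive step is (b). I would produce an explicit null-homotopy for multiplication by $f_i$ on the complex $(\K^{p,\star}_{W/S,{\bf f}}, \partial_\K)$: namely, left multiplication $L_{\xi_i}:\K^{p,s}\to \K^{p,s+1}$. Treating $\partial_\K$ as an odd super-derivation in the parity convention of \eqref{even.odd} (where $\xi_i$ is odd and $\eta_i$ even), the super-Leibniz rule together with $\partial_\K(\xi_i)=f_i$ yields
\[
\bigl(\partial_\K L_{\xi_i}+L_{\xi_i}\partial_\K\bigr)(\omega) \;=\; f_i\,\omega \;-\; \xi_i\,\partial_\K\omega \;+\; \xi_i\,\partial_\K\omega \;=\; f_i\,\omega ,
\]
so $f_i$ acts as zero on $\mathcal{H}^{p,s}_{W/S,{\bf f}}$ for each $i$; since the $f_i$ generate $\I_U$, the whole ideal annihilates the cohomology.

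The only delicate point I foresee is the sign bookkeeping in that super-Leibniz step, since the algebra carries two independent integer gradings $\deg_{DR}$ and $\deg_\K$ while super-commutativity is governed by a third, independent $\Z/2$-parity in which $\xi_i$ is odd and $\eta_i$ even. Once this convention is made compatible with the relations \eqref{even.odd} and with the already-verified identity $\partial_\K^2=0$, the homotopy identity above and the descent from $\O_W$-coherence plus $\I_U$-annihilation to $\O_U$-coherence are formal.
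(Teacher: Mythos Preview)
Your proof is correct and follows essentially the same route as the paper: the paper also observes that each $\K^{p,s}_{W/S,{\bf f}}$ is $\O_W$-coherent with $\partial_\K$ an $\O_W$-homomorphism, and then kills each $f_i$ in cohomology by the same null-homotopy, computing $\partial_\K(\xi_i\omega)=f_i\omega$ for a cocycle $\omega$. The only cosmetic difference is that the paper applies the Leibniz identity directly to a cocycle representative rather than stating the full homotopy relation $\partial_\K L_{\xi_i}+L_{\xi_i}\partial_\K=f_i\cdot\,$.
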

\begin{proof} 
Recall that the defining ideal of $U$ is given by $\I_U=\sum_i \O_W f_i$. Therefore, to be an $\O_U$-module, we have only to show that $f_i \mathcal{H}^{p,s}_{W/S,{\bf f}}=0$. Let $\omega \in \K^{p,s}_{W/S,{\bf f}}$ be a presentative of an element $[\omega]\in \mathcal{H}^{p,s}_{W/S,{\bf f}}$ such that $\partial_\K\omega=0$. Then we calculate that $\partial_\K(\xi_i \omega)=\partial_\K(\xi_i) \omega+\xi_i\partial_\K\omega= f_i\omega$. That is, the class $[f_i\omega]\in \mathcal{H}^{p,s}_{W/S,{\bf f}}$ is equal to zero. 
\end{proof}
We know already by \eqref{0th-cohomology} that the zero-th $\partial_\K$-cohomology is naturally given by 
\begin{equation}
\label{Hp0}
\mathcal{H}^{p,0}_{W/S,{\bf f}} \quad \simeq \quad \Omega_{U/S}^p \
\end{equation}
which is compatible with the De Rham operator action.

In order to analyze the support of $\mathcal{H}^{p,s}_{W/S,{\bf f}}$ more carefully, 
recall the direct sum expression of the Koszul-De Rham algebra \eqref{p-s-term}.
We observe that the Koszul boundary operator $\partial_\K$ splits into a sum $\partial+\tilde \partial$, where each  $\partial$ and $\tilde\partial$ is defined as  $\Omega_{W/S}^\bullet$-linear endomorphisms such that
$$
\partial \xi_i= f_i \ , \ \partial \eta_i= 0 \ , \ \partial 1= 0 \qquad \text{and} \qquad \tilde\partial \eta_i= df_i\ , \ \tilde\partial \xi_i =0\ , \ \tilde\partial 1 =0\ .
$$
We see immediately the relations $\partial, \tilde\partial: \K_{W/S,{\bf f}}^{p,s}\to \K_{W/S,{\bf f}}^{p,s-1}$ for all $p,s\in \Z$ and 
 $\partial^2=\tilde\partial^2=\partial\tilde\partial+\tilde\partial\partial=0$. That is, for each fixed $p\in\Z$, 
the subcomplex $(\K_{W/S,{\bf f}}^{p,\star},\partial_\K)$ can be regarded as the total complex of a double complex $(\K_{W/S,{\bf f}}^{p,\star},\partial, \tilde\partial)$.

More precisely, let us denote by $\Omega_{W/{S}}^a\xi^b\eta^c$ the space  spanned by those elements of the form $\omega \Xi\E$ with  $\omega\in \Omega_{W/{S}}^a$, and $\Xi$ and $\E$ are monomials of $\xi_j$'s and $\eta_j$'s of degree ${\deg}(\Xi)=b$ and $\deg(\E)=c$, respectively.
Then, we have $\partial: \Omega_{W/{S}}^a\xi^b\eta^c \to \Omega_{W/{S}}^a\xi^{b-1}\eta^c$ and $\tilde\partial: \Omega_{W/{S}}^a\xi^b\eta^c \to \Omega_{W/{S}}^{a+1}\xi^{b}\eta^{c-1}$. So, by putting
$\K_{W/S,\bf f}^{p,\{b,c\}}:=\Omega_{W/S}^{p-c}\xi^b\eta^c$ for $p,b,c\in \Z$, we get double complex 
$(\K_{W/S,\bf f}^{p,\{\star,\tilde\star\}},\partial,\tilde\partial)$ (where 
$\K_{W/S,\bf f}^{p,\{b,c\}}\not=0$ only when $0\le b\le l$ and $0\le c\le p$), and we have the identification of the total complex with the original Koszul complex: 
\begin{equation}
\label{Koszul-total}
(\oplus_{b+c=\star}\K_{W/S,\bf f}^{p,\{b,c\}}, \partial+\tilde\partial) \quad =\quad (\K_{W/S,{\bf f}}^{p,\star}, \partial_\K)
\end{equation}
for each fixed $p\in\Z$.
Explicitly, the double complex is given in the following Table.

\vspace{-0.1cm}

\[
\begin{array}{ccccccccccccc}
\vspace{0.1cm}
\Omega_{W/\!{S}}^p &\!\! \overset{\tilde\partial}{\leftarrow} \!\!&\Omega_{ W/\!{S}}^{p-1}\eta^1 &\!\!\! \overset{\tilde\partial}{\leftarrow} \!\!& \cdots &\!\! \overset{\tilde\partial}{\leftarrow} \!\!\! &\Omega_{ W/\!{S}}^1\eta^{p-1} &\!\!\! \overset{\tilde\partial}{\leftarrow} \!\!\!& \O_{ W}\eta^p&  \!\!\! \leftarrow  0 \\
\uparrow \partial && \uparrow\partial& & && \uparrow \partial && \uparrow\partial& \\
\vspace{0.2cm}
\Omega_{ W/\! {S}}^p\xi^1 &\!\! \overset{\tilde\partial}{\leftarrow} \!\!&\Omega_{ W/\! {S}}^{p-1}\xi^1\eta^1 &\!\!\! \overset{\tilde\partial}{\leftarrow} \!\!& \cdots &\!\!\overset{\tilde\partial}{\leftarrow} \!\!\! &\Omega_{W/\! {S}}^1\xi^1\eta^{p-1} &\!\!\! \overset{\tilde\partial}{\leftarrow} \!\!\!& \O_{W}\xi^1\eta^p&  \!\!\leftarrow  0 \\
\vspace{0.1cm}
\uparrow \partial && \uparrow\partial& &  && \uparrow \partial && \uparrow\partial&  \\
\vspace{0.1cm}
\cdots                 &&   \cdots               &&   \cdots  &&  \cdots                && \cdots    \\
\uparrow \partial && \uparrow\partial& &  && \uparrow \partial && \uparrow\partial&  \\
\vspace{0.2cm}
\! \Omega_{ W/\! {S}}^p\xi^{l\!-\!1}\!\! & \overset{\tilde\partial}{\leftarrow} &\! \Omega_{ W/\! {S}}^{p-1} \xi^{l\!-\!1} \eta^1\!\! &\!\! \overset{\tilde\partial}{\leftarrow} \!\!& \cdots &\! \overset{\tilde\partial}{\leftarrow} \! &\!\Omega_{ W/\! {S}}^1\xi^{l\!-\!1} \eta^{p\!-\!1}\!\! &\! \overset{\tilde\partial}{\leftarrow} \!&\! \O_{W}\xi^{l\!-\!1} \eta^p \!\!&  \!\!\leftarrow  0 \\
\uparrow \partial && \uparrow\partial& &  && \uparrow \partial && \uparrow\partial&  \\
\vspace{0.2cm}
\Omega_{ W/\! {S}}^p\xi^{l} &\!\! \overset{\tilde\partial}{\leftarrow} \!\!&\Omega_{ W/\! {S}}^{p-1}\xi^{l}\eta^1 &\!\!\! \overset{\tilde\partial}{\leftarrow} \!\!& \cdots &\!\! \overset{\tilde\partial}{\leftarrow} \!\!\! &\Omega_{ W/\! {S}}^1\xi^{l}\eta^{p-1} &\!\!\! \overset{\tilde\partial}{\leftarrow} \!\!\!& \O_{W}\xi^{l}\eta^p&  \!\!\leftarrow  0 \\
 \uparrow  && \uparrow& &  && \uparrow  && \uparrow&  \\
  0             &&    0          &&            &&    0          &&      0
\end{array}
\vspace{-0.1cm}
\]
\centerline{    {\bf   Table:  \  Double complex $(\K_{W/S,\bf f}^{p,\{\star,\tilde\star\}}, \partial,\tilde\partial)$\qquad } }

\bigskip

\begin{definition} {\bf 5.}
A based relative chart $(j,{\bf f}) $ is called a {\it complete intersection} if its underlying relative chart \eqref{relativechart2} satisfies the following 1), 2) and 3).

1)  The varieties $U$, $W$ and $S$ are smooth.

2)  The map $\Phi_W: W\to S$ is submersive, in particular, $\Phi_W$ has no critical points.

3)  The $U$ is a complete intersection subvariety of $W$ and $f_1,\cdots,f_l$ is a minimal 

\quad system of equations for  $U$, i.e.~$f_1,\cdots,f_l$ form a regular sequence on $W$.
\end{definition}

\smallskip
From now on through the end of the present paper, we study only based relative charts which is  complete intersection.
For such relative chart, we say that the morphism $\Phi_U: U\to S$ is critical at a point in $U$ if $\Phi_U$ is not submersive at the point. That is, the variety of critical set is given by
\begin{equation}
\label{relative-critical}
C_{\Phi_U}:= \{ x\in U \mid \text{the } \rank \text{ of the Jacobian of } \Phi_U \text{ at } x \text{ is less than } \dim_\C S \}, \!\!\!
\end{equation}
whose defining ideal $\I_{C_{\Phi_U}}$ in $\O_U$ is the one generated by the minors  of size  $\dim_\C S$ of the Jacobian matrix of $\Phi_U$.  We now prove some basic properties of 
$\mathcal{H}^{p,s}_{W/S,{\bf f}}$ which we shall use in the next section seriously.

\bigskip
\begin{lem}
\label{f-free}
{\it Suppose that a based relative chart \eqref{relativechart2} is a complete intersection.
Then, we have 

\smallskip
\noindent
{\rm (1)}  the $\O_U$-module $\mathcal{H}^{p,s}_{W/S,{\bf f}}$ for $s,p\in \Z$ together with the action of De Rham operator $d_{DR}$ is independent  of the choice of basis {\bf f} but depends only on the morphism $\Phi_U$, 

\noindent
{\rm (2)}  the support of the module $\mathcal{H}^{p,s}_{W/S,{\bf f}}$ for $s>0$ is contained in the critical set $C_{\Phi_U}$. }
\end{lem}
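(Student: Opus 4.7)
\medskip
\noindent
\textbf{Proof plan.} The plan is to prove the two claims in opposite order: first establish (2) via the spectral sequence of the double complex $(\K^{p,\{b,c\}}_{W/S,{\bf f}}, \partial, \tilde\partial)$ displayed in the Table, and then deduce (1) from the functoriality lemma (Lemma \ref{functorial}) together with a Koszul-syzygy argument.

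For (2), I would take $\partial$-cohomology first. Because the relative chart is complete intersection, $f_1,\ldots,f_l$ is a regular sequence on the smooth $W$, and each $\Omega^{p-c}_{W/S}$ is locally free (hence flat) over $\O_W$, so the column complex $(\Omega^{p-c}_{W/S}\,\xi^\bullet\,\eta^c,\partial)$ is the Koszul complex of a regular sequence with flat coefficients; it is exact for $b>0$ and gives $\Omega^{p-c}_{W/S}\otimes_{\O_W}\O_U$ at $b=0$. The spectral sequence therefore collapses to the row $b=0$, and $\mathcal{H}^{p,s}_{W/S,{\bf f}}$ is identified with the $\tilde\partial$-cohomology at $c=s$ of the complex $\bigoplus_c (\Omega^{p-c}_{W/S}|_U)\,\eta^c$. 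At any point $x\in U\setminus C_{\Phi_U}$, the relative differentials $d_{W/S}f_1,\ldots,d_{W/S}f_l$ are linearly independent in $\Omega^1_{W/S}|_x$, so after completing them to a local relative coframe $df_1,\ldots,df_l,dx_{l+1},\ldots,dx_n$ the complex factors as
\[
\bigl(\O_U\otimes\Lambda^\bullet(dx_{l+1},\ldots,dx_n)\bigr)\,\otimes\,\bigl(\Lambda^\bullet(df_1,\ldots,df_l)\otimes\C[\eta_1,\ldots,\eta_l],\,\tilde\partial\bigr),
\]
with $\tilde\partial$ acting only on the second tensor factor. Under the identification $df_i\leftrightarrow d\eta_i$, the second factor is the polynomial de Rham complex of $\C^l$, whose cohomology is concentrated in total degree $0$ by the (polynomial) Poincar\'e lemma. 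Hence $\mathcal{H}^{p,s}_{W/S,{\bf f}}$ vanishes at $x$ for every $s>0$, which proves (2).

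For (1), let ${\bf f}$ and ${\bf f}'$ be two minimal generating systems of $\I_U$. By the complete-intersection hypothesis both have the same length $l$ and generate the same ideal, so there exists $A\in\Gamma(W,GL_l(\O_W))$ with $f_i=\sum_k A_i^k f'_k$. Applying Lemma \ref{functorial} with $w=\mathrm{id}_W$ and $h=A$ (respectively $h=A^{-1}$) gives mutually inverse bi-dg-algebra morphisms ${\bf w}_A^\diamond:\K_{W/S,{\bf f}}\rightleftarrows\K_{W/S,{\bf f}'}:{\bf w}_{A^{-1}}^\diamond$, whose compositions are the identities by the functorial formula. This yields an isomorphism on $\partial_\K$-cohomology that commutes with $d_{DR}$ by construction. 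To see that this isomorphism is canonical, note that any two choices $A,A'$ satisfy $\sum_k(A-A')_i^k f'_k=0$; since $f'_1,\ldots,f'_l$ is a regular sequence, this first syzygy lies in the image of the Koszul differential, so $(A-A')_i^k=\sum_l h_i^{k,l} f'_l$ with $h_i^{k,l}=-h_i^{l,k}$. A direct computation then shows that ${\bf w}_A^\diamond-{\bf w}_{A'}^\diamond$ is the $\partial_\K$-differential of an explicit primitive in $\xi'$-quadratic and $\xi'\eta'$-linear terms, i.e.\ a chain homotopy with respect to $\partial_\K$. Consequently the induced map on $\partial_\K$-cohomology depends only on ${\bf f}$ and ${\bf f}'$ (not on $A$), and combining this with (2) and the intrinsic identification $\mathcal{H}^{p,0}_{W/S,{\bf f}}\simeq\Omega^p_{U/S}$ \eqref{Hp0} shows that the whole $\O_U$-module structure together with $d_{DR}$ depends only on $\Phi_U$.

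The main obstacle is the canonicity part of (1): while the existence of the isomorphism is a direct application of Lemma \ref{functorial}, the verification that distinct liftings $A$ and $A'$ give chain-homotopic morphisms requires tracking both $\xi'$- and $\eta'$-components through the Koszul syzygies of $f'$, in particular handling the term $d(A-A')$ appearing in ${\bf w}^\diamond(\eta_i)$. Once this homotopy is produced, the remaining arguments are routine spectral-sequence and Poincar\'e-lemma manipulations already used in (2).
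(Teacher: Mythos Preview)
Your argument for (2) is correct and runs parallel to the paper's: both first pass to the complex $(**)=\bigl((\bigwedge^{p-\star}\Omega^1_{W/S})\eta^\star\otimes_{\O_W}\O_U,\tilde\partial\bigr)$ via the column-wise Koszul spectral sequence, then show acyclicity at non-critical points. The paper does the last step by an inductive \emph{reduction lemma} (Lemma~\ref{reduction}: remove one $f_i$ at a time whenever $df_i$ is part of a local frame of $\Omega^1_{W/S}$), while you invoke the polynomial Poincar\'e lemma directly; these amount to the same computation.

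For (1), however, there is a genuine gap. The claim is that $\mathcal{H}^{p,s}_{W/S,{\bf f}}$ \emph{depends only on $\Phi_U$}, not merely that it is independent of ${\bf f}$ for a fixed ambient $W$. This stronger statement is exactly what is needed afterwards (Lemma~\ref{Hpq}) to glue the local objects into a global sheaf $\mathcal{H}^{p,s}_\Phi$ on $Z$: two overlapping relative charts generally have \emph{different} ambient spaces $W_1,W_2$ with no holomorphic map between them, so your functoriality argument with $w=\mathrm{id}_W$ simply does not apply, and the sentence ``combining this with (2) and \eqref{Hp0} shows that the whole $\O_U$-module structure depends only on $\Phi_U$'' is a non sequitur. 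The paper resolves this by introducing an auxiliary third chart $W:=W_1\times_U W_2$ dominating both, and then applying the reduction lemma repeatedly (the extra $df^2_j$'s, being transverse coordinates to $W_1\subset W$, are part of a relative coframe) to produce $\O_{U,z}$-isomorphisms $\mathcal{H}^{p,s}_{W_i/S,{\bf f}^i,z}\simeq\mathcal{H}^{p,s}_{W/S,{\bf f},z}$ for $i=1,2$, compatible with $d_{DR}$ via Lemma~\ref{Koszul-cohomology}~ii).

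Two smaller remarks on the part of (1) you do address. First, the assertion $A\in\Gamma(W,GL_l(\O_W))$ is not automatic: from $f=Af'$ and $f'=Bf$ one only gets $AB\equiv I\bmod\I_U$, so $A$ is invertible near $U$ but perhaps not on all of $W$; this is harmless since the cohomology is an $\O_U$-module, but the argument should be phrased with the pair $A,B$ rather than $A,A^{-1}$, running your homotopy argument on both compositions. Second, your chain-homotopy idea for showing that the induced map is independent of the lift $A$ is sound and can be carried through, but the paper avoids it entirely: canonicity there comes for free from the reduction lemma together with the functoriality of $\iota^\diamond$.
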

\begin{proof}
Before we  start to prove this Lemma, we visit the double complex $\K_{W/S,{\bf f}}^{p,\{b,c\}}$ given in {\bf Table}  under the complete intersection assumption.

\medskip
By {\bf Definition} 1) and 2) of complete intersection, $\Omega_{W/S}^\bullet$ is an $\O_W$-locally free modules of finite rank.
The $i$th vertical direction (w.r.t. the coboundary operator $\partial$) of the diagram for $i=0,1,\cdots,p$ is the classical Koszul complex on the locally free module $\Omega_{W/S}^{p-i}$ for the regular sequence  $f_1,\cdots,f_l$ (recall that $\xi_i$'s are odd variables), which is exact except at the zeroth stage,  and the cokernel module at the zeroth stage is an $\O_U$ locally free module isomorphic to $\Omega^{p-i}_{W/S}\eta^i/(f_1,\cdots,f_l)\Omega^{p-i}_{W/S}\eta^i=\big(\overset{p-i}{\wedge} \Omega_{W/S}^1\eta^i\big)\otimes_{\O_W}\O_U$. 
Between the modules, $\tilde \partial$ induces a cochain complex structure, denoted again by  $\tilde \partial$. In view of \eqref{Koszul-total}, this chain complex 

$$
{(**)} \qquad\qquad \qquad \qquad\qquad \qquad  \big(\big(\overset{p-\star}{\wedge} \Omega_{W/S}^1\big)\eta^\star\otimes_{\O_W}\O_U, \ \tilde\partial \ \big)  \qquad\qquad \qquad \qquad\qquad \qquad \qquad\qquad \qquad 
$$ 
 is quasi-isomorphic to the Koszul complex $(\K_{W/s,{\bf f}}^{p,\star},\partial_\K)$.
 Therefore, we show that the cohomology groups of (**) does not depend on the choice of the bases ${\bf f}$.

We provide, now, the following elementary but quite useful reduction lemma. 

\bigskip
\begin{lem}  
\label{reduction}
{\it Let $f_1$ be the first element of the basis ${\bf f}=\{f_1,\cdots,f_l\}$. Suppose $df_1$ is a part of $\O_W$-free basis of the module $\Omega_{W/S}^1$. Consider the hypersurface  $W':=\{f_1=0\}\subset  W$ and set ${\bf f}'=\{f_2,\cdots,f_{l}\}$. Then, we have 

\noindent
{\rm (1)} $(j':U\to W',{\bf f}')$ is also a complete intersection based relative chart, 

\noindent
{\rm (2)} the inclusion map $\iota: W'\subset W$ together with the correspondence $\eta_1\mapsto 0$ induces a morphism between the based relative charts and a quasi-isomorphism of chain complexes of $\O_U$-modules:
$$
\big(\big(\overset{p-\star}{\wedge} \Omega_{W/S}^1\big)\eta^\star\otimes_{\O_W}\O_U,\tilde\partial \big) \to \big(\big(\overset{p-\star}{\wedge} \Omega_{W'/S}^1\big)\eta'^\star\otimes_{\O_{W'}}\O_U,\tilde\partial \big).
$$ 
\noindent
{\rm (3)} The $\O_U$-isomorphism:  $\mathcal{H}^{p,s}_{W/S,{\bf f}} \simeq  \mathcal{H}^{p,s}_{W'/S,{\bf f}'}$ ($p,s\in\Z$) obtained from this quasi-isomorphism coincides with $(\iota|_U)^\diamond$ (recall {\it Lemma} {\rm \ref{Koszul-cohomology}  ii)}).  In particular, the isomorphism commutes with the De Rham operator action.}
\end{lem}
\begin{proof} (1)  The fact that $df_1$ is a part of $\O_W$-free basis $\Omega_{W/S}^1$ implies that $W'$ is a smooth variety and that the restriction $\Phi_U':=\Phi_U|_{W'}$ is still submersive. 

\noindent
(2) On the space $W$, the two chain complexes of sheaves 
$\big(\big(\overset{p-\star}{\wedge} \Omega_{W'/S}^1\big)\eta'^\star\otimes_{\O_{W'}}\O_U,\tilde\partial \big)$ and  
  $\big(\big(\overset{p-\star}{\wedge} \Omega_{W/S}^1/\O_W df_1\big)\eta'^\star \otimes_{\O_W}\O_U,\tilde\partial \big)$ are naturally isomorphic, since $\O_U\simeq \O_W/(f_1,\cdots,f_l)\simeq \O_{W'}/(f_2,\cdots,f_l)$. 
  Therefore, in order to show (2), it is sufficient to show the following general linear algebraic facts (cf.~\cite{Saito11}).
  
  \medskip
\noindent
{\it Proposition.}
{\it Let $M$ be a free module of finite rank over a noetherian commutative unitary ring $R$. Let $\wedge^* M$ be the Grassmann algebra of $M$ over $R$. For given elements $\omega_1,\cdots,\omega_k$ of $M$, consider the polynomial ring $\wedge^*M[\eta]$ of $k$ variables $\eta_1,\cdots,\eta_k$ equipped with a Koszul differential $\tilde\partial$ defined by setting $\tilde\partial(\eta_i)=\omega_i$ on it.  

\noindent
(a) Let $\mathfrak{a}$ be the ideal in $R$ generated by the coefficients of $\omega_1\wedge\cdots\wedge \omega_k$. Then, $i$-th cohomology group of $(\wedge^*M[\eta], \tilde\partial)$ vanishes for $i<\mathrm{depth}(\mathfrak{a})$.

\noindent
(b) If $\omega_1$ is a part of some $R$-free basis system of $M$, then the natural chain morphism $(\wedge^*M[\eta], \tilde\partial) \to (\wedge^*M/R\omega_1 [\eta'], \tilde\partial')$  (where $\eta'$ is the indeterminates $\eta_2,\cdots,\eta_k$ such that $\tilde  \partial'(\eta_i)=\omega_i$ ($i=2,\cdots,k$) and $\eta_1$ is mapped to 0) is quasi isomorphic.}

 \medskip
\noindent
(3) First, we note that there is a slight abuse of notation. Namely, we have needed to fix the coefficient matrix $h$ of the transformation $\iota^*(f_1)=0$ and $\iota^*(f_i)=f_i$ for $i=2,\cdots,l$ in order that $(\iota,h)^\diamond: \K_{W/S,{\bf f}}^{p,s} \to \K_{W'/S,{\bf f}'}^{p,s}$ is defined (recall {\it Lemma} \ref{functorial}).  Once $(\iota,h)^\diamond$ is defined in this manner, then 
we have $(\iota,h)^\diamond(\xi_1)=(\iota,h)^\diamond(\eta_1)=0$ and
$(\iota,h)^\diamond(\xi_i)=\xi_i$, $(\iota,h)^\diamond(\eta_i)=\eta_i$ for $i=2,\cdots,l$. Then, we observe that $(\iota,h)^\diamond$ is compatible with the double complex $\K_{W/S,{\bf f}}^{p,{b,c}}$ decomposition, inducing morphism $(\iota,h)^{\diamond,double}: \K_{W/S,{\bf f}}^{p,{b,c}} \to \K_{W'/S,{\bf f}'}^{p,{b,c}}$ for all $p,b,c\in\Z$.
Then, the chain map in (2) obviously coincides with the one induced from $(\iota,h)^{\diamond,double}$.
\end{proof}

\medskip
Let us come back to the proof of {\it Lemma} \ref{f-free}.

\smallskip
\noindent
{\it Proof of Lemma} \ref{f-free} (1).  

Suppose that there are two complete intersection based charts $(j^1:U_1\to W_1,{\bf f}^1)$ and $(j^2:U_2\to W_2,{\bf f}^2)$ over the same base set $S$ and points $z_1\in U_1$ and $z_2\in U_2$ such that there is a local bi-holomorphic map $(U^1,z_1) \simeq (U^2,z_2)$ which commutes with the maps $\Phi_{U^1}$ and $\Phi_{U^2}$ in neighborhoods of $z_1$ and $z_2$. Then we show that there is a natural $\O_{U_1,z_1}$-$O_{U_2,z_2}$-isomorphism of the stalks:  
$$\mathcal{H}^{p,s}_{W^1/S,{\bf f^1},z_1} \simeq \mathcal{H}^{p,s}_{W^2/S,{\bf f^2},z_2}$$ which is equivariant with the De-Rham actions. By shrinking the relative charts $j^i$ ($i=1,2$) suitably, we may assume $U_1\simeq U_2$, and, further more, that $W_i$ is a Stein domain of $U_i\times \C^{l_i}$ such that i) the embedding $j^i$ is realized by the isomorphism $U_i\simeq U_i\times 0\subset W_i\subset U_i\times \C^{l_i}$ and ii) the composition of the embedding of $W_i$ in $U_i\times \C^{l_i}$ with the projection to the $j$-th component of $\C^{l_i}$ is equal to the $j$-th component, say $f^i_j$, of ${\bf f}^i$ ($i=1,2$) (however, the compositions of the embedding $W_i\to U_i\times \C^{l_i}$ with the projection to $U_i$ and with $\Phi_{U_i}$ may not necessarily coincide with the morphism $\Phi_{W_i}: W_i\to S$).  

The proof is achieved by introducing an auxiliarly  third based relative chart $(j,W)$. Namely, set $U:=U^1\simeq U^2$ and let $z\in U$ be the point corresponding to $z_i\in U_i$. Then, $W:=W_1\times_U W_2$ may naturally considered as a Stein domain in  $U\times \C^{l_1+l_2}$ such that $W_i=(U\times \C^{l_i})\cap W$ ($i=1,2$). Since $W$ is Stein and the maps $\Phi_{W_1}:W_1\to S$ and $\Phi_{W_2}:W_2\to S$ 
coincide with $\Phi_U$ on the intersection $W^1\cap W^2=U$, we can find a holomorphic map $\Phi_W:W\to S$ (up to some ambiguity) which coincides with $\Phi_{W_i}$ on each $W_i$ (e.g. $p_{W_1}^*\Phi_{W_1}+p_{W_2}^*\Phi_{W_2}-p_{U}^*\Phi_U$). We shall denote again by $f^1_j$ (resp.\ $f^2_j$) the $j$-th (resp.\ $l_1+j$-th) component of the coordinate of $\C^{l_1+l_2}$.Then, ${\bf f}:={\bf f}_1\cup{\bf f}_2$ forms a basis of the defining ideal $\I_U$ of $U\simeq U\times 0$ in $W$. Thus, we obtain a complete intersection based relative chart $(j:U\to W,{\bf f})$.

Let us show the existence of natural $\O_{U,z}$-isomorphisms: 
$$
{(***)} \qquad\qquad \qquad \qquad\qquad \qquad  \mathcal{H}^{p,s}_{W^i/S,{\bf f^i},z_i}\simeq \mathcal{H}^{p,s}_{W/S,{\bf f},z}  \qquad\qquad \qquad \qquad\qquad \qquad \qquad\qquad \qquad 
$$ 
commuting with De-Rham action for $i=1,2$. We show only the $i=1$ case (the other case follows similarly).  
For the end, we explicitly analyze the chain complex $(**)$ (see {\it Proof} of {\it Lemma} 4.7 in p.13) in a neighborhood of each point $z\in U$. 
Let $\underline{z}=(z^0,\cdots,z^n)$ be a local coordinate system of $U$ at $z$ so that $(\underline{z},{\bf f})$ form a coordinate system of $W$ at $z$. Let 
${\bf t}=(t^1,\cdots,t^{\dim S})$ be a local coordinate system of $S$ at the image of $z$, so that the morphism $\Phi_W:W\to S$ is expressed by the coordinates as ${\bf t}= \Phi_W(\underline{z},{\bf f}^1,{\bf f}^2)$ so that  $\Phi_{W_1}= \Phi_W(\underline{z},{\bf f}^1,0)$, 
$\Phi_{W_2}= \Phi_W(\underline{z},0,{\bf f}^2)$ and $\Phi_U=\Phi_W(\underline{z},0)$.

The fact that the restriction $\Phi_W |_{W_1}=\Phi_{W_1}$ is submersive over $S$ implies that already a $\dim_\C S$-minor of the part of Jacobi matrix of $\Phi_{W}(\underline{z},{\bf f}^1,{\bf f}^2)$ corresponding to the derivations by the coordinates $z^j$ ($j=0,\cdots,n$) and $f^1_1,\cdots,f_{l_1}^1$ is invertible (in a neighborhood of $z$). Then, in the quotient module $\Omega^1_{W/S}=\Omega^1_{W}/\sum_{i=1}^{\dim_\C S} \O_Wd\Phi_{W,i}$, the differentials $df_1^{2},\cdots, df_{l_2}^2$ of the remaining coordinates $f_1^2,\cdots,f_{l_2}^2$ form part of an $\O_W$-free basis in a neighborhood of $z$. Then, again shrinking the charts $W_i$ ($i=1,2)$ and $W$ suitably,  we can apply {\it Lemma} \ref{reduction} repeatedly, and we obtain the $\O_U$-isomorphism $(***)$. 

To show the independence of De Rham operator from a choince of basis ${\bf f}$, we cannot use the complex $(**)$ (there does not seem to exist a morphism $d_{DR}: (**)^p \to (**)^{p+1}$ which induces the De Rham operator: 
$\mathcal{H}^{p,s}_{W/S,{\bf f}} \to \mathcal{H}^{p+1,s}_{W/S,{\bf f}} $). However, {\it Lemma} \ref{reduction} (3) together with the naturallity of $\iota^\diamond$ ({\it Lemma} \ref{Koszul-cohomology} ii)) implies the compatibility of the De Rham operation with the isomorphism ($***$), and, hence, the independence from a choice of basis ${\bf f}$ of  the De Rham operator on $\H^{\bullet,s}_{W/S,{\bf f}}$.

\medskip
\noindent
{\it Proof of Lemma} \ref{f-free} (2).  
It is sufficient to show that the stalk of $\mathcal{H}^{p,s}_{W/S,{\bf f}} $ at a point, say $z$, of $U$, where $\Phi_U$ is submersive, vanishes for $s>0$. The assumption on the point $z$ means that the Jacobi matrix of $\Phi_U$ with respect to the derivations by $z^0,\cdots,z^n$ has a non-vanishing minor at the point $z\in U$.  So, in a neighborhood of $z$ in $W$, the corresponding minor of the Jacobi matrix of $\Phi_W$ does not vanish. This means that $df_1,\cdots,df_l$ form a part of $\O_W$-free basis of $\Omega_{W/S}^1$. Then applying {\it Lemma} \ref{reduction} inductively for a small neighborhood, we reduce to the relative chart of the form $j:U\to U$, and we conclude that $\mathcal{H}^{p,s}_{W/S,{\bf f}, z} $ is quasi-isomorphic to a single module $\Omega_{U/S,z}^p$ at $z$. That is, $\mathcal{H}^{p,0}_{W/S,{\bf f},z}\simeq  \Omega_{U/S,z}^p$ and $\mathcal{H}^{p,s}_{W/S,{\bf f},z}=0 $ for $s>0$.

This completes the proof of {\it Lemma} \ref{f-free}.
\end{proof}

\noindent
{\it Notation.}  As a consequence of {\it Lemma} \ref{f-free}, under the assumption that the relative chart $(j,{\bf f})$ is a complete intersection, the module  $\mathcal{H}^{p,s}_{W/S,{\bf f}}$, as an $\O_U$-module on $U$ with De Rham differential operator, depends only on the morphism $\Phi_U:U\to S$ but not on ${\bf f}$. Therefore, we shall denote the module also by $\mathcal{H}^{p,s}_{\Phi_U}$ (see {\it Lemma} \ref{Hpq}).

\bigskip
\begin{rem}
1.  The support of the modules $\mathcal{H}^{p,s}_{W/S,{\bf f}}$ for $s>0$ is contained in the critical set $C_{\Phi_U}$ (i.e.~locally, we have $\I_{C_{\Phi_U}}^m \mathcal{H}^{p,s}_{W/S,{\bf f}}=0$ for some positive integer $m$), does not imply that the module may not be an $\O_{C_{\Phi_U}}$-module.

2. In view of \cite{Saito11}, $\mathcal{H}^{p,s}_{W/S,{\bf f}}=0 $ for $s<\mathrm{depth} (\I_{\Phi_U})$. But we do not use this fact in the present paper.
\end{rem}

\bigskip
\noindent
{\bf E)   The complex  $(\mathcal{H}_\Phi^{\bullet,s}, d_{DR})$ on $Z$.}  

As an important consequence of  {\bf A)}-{\bf D)}, we introduce complexes 
$(\mathcal{H}_\Phi^{\bullet,s}, d_{DR})$ of  $\O_Z$-coherent sheaves  for $s\in \Z$.

\bigskip
\begin{lem}
\label{Hpq}
{\it 
Let $\Phi:Z\to S$ be a flat holomorphic map between complex manifolds and let $C_\Phi$ be its critical set loci as given in the Main Theorem. For $s\in \Z$, there  exists a  chain complex $(\mathcal{H}_\Phi^{\bullet,s},d_{DR})$ of $\O_Z$-coherent modules such that, 
for any based relative chart $(j:U\to W,{\bf f})$, there is a natural isomorphism: 
$$
(\mathcal{H}_\Phi^{\bullet,s}, d_{DR})|_{U} \simeq (\mathcal{H}_{W/S,{\bf f}}^{\bullet,s}, d_{DR}). 
$$
In particular, this implies

{\rm  i)}  For $s<0$, $\mathcal{H}_\Phi^{\bullet,s}=0$. 

{\rm ii)}  For $s=0$,
there is a natural isomorphism:   
 $$
 (\mathcal{H}_\Phi^{\bullet,0}, d_{DR})\quad \simeq \quad (\Omega_{Z/S}^\bullet,d_{DR}).
 $$

{\rm iii)}  For $s>0$ and $p\in\Z$, we have 
$$
\mathrm{Supp}(\mathcal{H}_\Phi^{p,s}) \quad \subset \quad C_{\Phi}.
$$
 }
\end{lem}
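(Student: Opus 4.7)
The plan is to construct $(\mathcal{H}_\Phi^{\bullet,s}, d_{DR})$ by local definition and gluing, exploiting the independence result \emph{Lemma} \ref{f-free} (1).

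First, by \emph{Lemma} \ref{complete-intersection-atlas}, every point $z\in Z$ admits a neighborhood $U_z$ sitting inside a complete intersection based relative chart $(j_z: U_z \to W_z, {\bf f}_z)$ over some Stein open $S_z\subset S$ with $\Phi(U_z)\subset S_z$. On each such chart the Koszul--De Rham algebra $\mathcal{K}^{\bullet,\star}_{W_z/S_z,{\bf f}_z}$ is defined, and by \emph{Lemma} \ref{coherent} its $\partial_\K$-cohomology sheaves $\mathcal{H}^{p,s}_{W_z/S_z,{\bf f}_z}$ are $\O_{U_z}$-coherent; by \emph{Lemma} \ref{Koszul-cohomology} (i) they carry a De Rham differential $d_{DR}:\mathcal{H}^{p,s}\to \mathcal{H}^{p+1,s}$ with $d_{DR}^2=0$.

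Next, I would glue these local complexes. For two overlapping charts $(j_1,{\bf f}^1)$ and $(j_2,{\bf f}^2)$ whose underlying opens in $Z$ agree (after shrinking) on some $U_{12}\subset U_1\cap U_2$, \emph{Lemma} \ref{f-free} (1) provides a canonical $\O_{U_{12}}$-isomorphism
\[
\varphi_{12}:\ \mathcal{H}^{p,s}_{W_1/S_1,{\bf f}^1}\big|_{U_{12}}\ \xrightarrow{\ \sim\ }\ \mathcal{H}^{p,s}_{W_2/S_2,{\bf f}^2}\big|_{U_{12}}
\]
commuting with $d_{DR}$. The proof of \emph{Lemma} \ref{f-free} (1) constructs $\varphi_{12}$ by introducing an auxiliary fiber-product chart $(W_1\times_U W_2,{\bf f}^1\cup{\bf f}^2)$ and using the reduction \emph{Lemma} \ref{reduction} to contract both sides to it via morphisms of based charts. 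Because both arrows are induced by $\bf{w}^\diamond$ for genuine chart morphisms, the functoriality statement in \emph{Lemma} \ref{Koszul-cohomology} (ii) yields the cocycle condition $\varphi_{13}=\varphi_{23}\circ\varphi_{12}$ on any triple overlap $U_{123}$: one compares both sides to the common fiber-product chart $W_1\times_U W_2\times_U W_3$ and invokes uniqueness of the canonical isomorphism. Consequently the local sheaves glue to a well-defined $\O_Z$-coherent sheaf $\mathcal{H}_\Phi^{p,s}$ on $Z$, and the $d_{DR}$'s glue to a global differential $\mathcal{H}_\Phi^{p,s}\to \mathcal{H}_\Phi^{p+1,s}$ squaring to zero.

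Finally, the three listed properties follow immediately from the local description. Property (i) is clear since $\mathcal{K}^{p,s}_{W/S,{\bf f}}=0$ for $s<0$ by construction. Property (ii) follows from \eqref{Hp0}: locally $\mathcal{H}^{p,0}_{W/S,{\bf f}}\simeq \Omega^p_{U/S}$ compatibly with the De Rham differential (\emph{Lemma} \ref{pi-comparison} 2), and these isomorphisms are natural under $\bf w^\diamond$ (\emph{Lemma} \ref{pi-comparison} 3), so they glue to $(\mathcal{H}_\Phi^{\bullet,0},d_{DR})\simeq(\Omega^\bullet_{Z/S},d_{Z/S})$. Property (iii) is \emph{Lemma} \ref{f-free} (2) applied locally, since the local critical sets $C_{\Phi_{U_z}}$ are precisely $C_\Phi\cap U_z$.

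The main obstacle is verifying the cocycle condition for gluing, i.e. checking that the canonical isomorphisms $\varphi_{ij}$ are genuinely canonical (independent of all auxiliary choices made in the proof of \emph{Lemma} \ref{f-free} (1), such as the auxiliary fiber-product chart, the choice of $\Phi_W$ extending $\Phi_{W_i}$, and the repeated applications of \emph{Lemma} \ref{reduction}). The cleanest way is to argue that any two complete intersection based charts with the same underlying $\Phi_U:U\to S$ are connected through fiber-product charts by morphisms $\bf{w}$, and then the equality $\varphi_{13}=\varphi_{23}\circ\varphi_{12}$ becomes a consequence of the functoriality $(\bf{w}_1\circ\bf{w}_2)^\diamond={\bf w}_2^\diamond\circ{\bf w}_1^\diamond$ established in \emph{Lemma} \ref{functorial}.
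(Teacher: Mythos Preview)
Your proposal is correct and follows essentially the same approach as the paper: define $\mathcal{H}^{p,s}_\Phi$ locally on complete intersection based charts via the $\partial_\K$-cohomology of the Koszul--De Rham algebra, then glue using the independence result \emph{Lemma}~\ref{f-free}~(1) (and its auxiliary fiber-product construction $(***)$), and read off (i)--(iii) from the local descriptions \eqref{Hps}, \eqref{Hp0}, and \emph{Lemma}~\ref{f-free}~(2). You are in fact more explicit than the paper about the cocycle condition for gluing, which the paper simply asserts by saying the local sheaves ``patch each other naturally.''
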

\begin{proof}  Let $(j:U\to W,{\bf f})$ be any based relative chart, which is a complete intersection.  Applying the construction of \eqref{Hps},  on the open subset $U$ of $Z$, we obtain a sequence for $s\in\Z$  of complexes $(\mathcal{H}_{W/S,{\bf f}}^{\bullet,s},d_{DR})$ of $\O_Z$-coherent modules equipped with the De Rham operator action. Let $(j':U'\to W',{\bf f}')$ be another complete intersection based relative chart, which introduces the complexes $(\mathcal{H}_{W'/S,{\bf f}'}^{\bullet,s},d_{DR})$ on the open set $U'$. Then {\it Lemma} \ref{f-free} together with $(***)$ says that, on the intersection $U\cap U'$, they patch each other naturally so that we obtain the complexes of sheaves on $U\cup U'$. Obviously, $Z$ is covered by charts which extends to complete intersection relative charts, there exists a global sheaf $ \mathcal{H}_\Phi^{p,s}$ on $Z$ together with the action of a De Rham operator as stated. The statement i) follows from the definition \eqref{Hps} and the fact $\mathcal{K}^{p,s}_{W/S,{\bf f}}\! =\! 0$ for $s\!<\!0$,  ii) follows from \eqref{Hp0}, and iii) follows from Lemma \ref{f-free} (2).
\end{proof}

\medskip
\begin{rem}
\label{Hpq-remark}
As we see, the chain complexes $(\mathcal{H}_\Phi^{\bullet,s}, d_{DR})$ themselves are independent of the choices of  relative charts. However, for its construction, we have used the relative charts. Can they be constructed without using the relative charts (or, without using Koszul-De Rham algebras)? (See the following Remark 4.12).
\end{rem}

\smallskip
\begin{rem}
It is also possible to consider a quotient algebra  $\overline{\mathcal{K}}_{W/S}$  of the Koszul-De Rham algebra $ \mathcal{K}_{W/S,{\bf f}}$ as follows.
Namely, suppose the defining ideal $\I_U$ of $U$ in $W$ has the following finite presentation.
\begin{equation}
\label{presentation}
\oplus\ \O_W^{l_1} \longrightarrow  \oplus\ \O_W^{l_0} \longrightarrow  \I_U  \longrightarrow 0.
\end{equation}
\noindent
Explicitly, let $f_1,\cdots\!, f_{l_0}\! \in\! \Gamma(W,\O_W)$ 
be a system generators of $\I_U$ (i.e.\ the image of the basis of $ \oplus\ \O_W^{l_0}$)  
and let $(g^1_j,\cdots, g^{l_0}_j)$ $\in \Gamma(W,\O_W^{l_0})$ ($j\! =\!1,\cdots\!,l_1$) 
be a generating system of relations $g^1_jf_1+\cdots+ g^{l_0}_jf_{l_0}\!=\!0$ (i.e.~ the image of the basis of  $\oplus\ \O_W^{l_1}$).
Then, we define
\begin{equation}
\label{quotient-algebra}
 \overline{\mathcal{K}}_{W/S}:= \Omega_{W/{S}}^\bullet \langle \xi_1,\cdots,\xi_{l_0}\rangle [\eta_1, \cdots,\eta_{l_0}] / \I
\end{equation}
where $\I$ is the both sided ideal  generated by the relations \eqref{even.odd} and \begin{equation}
\label{relations}
\begin{array}{cl}
g^1_j\xi_1+\cdots+ g^{l_0}_j\xi_{l_0} & (j=1,\cdots,l_1)  \\
g^1_j\eta_1+\cdots+ g^{l_0}_j \eta_{l_0} + dg^1_j\xi_1+\cdots+ dg^{l_0}_j\xi_{l_0} &(j=1,\cdots,l_1).
\end{array}
\end{equation}

Then, as the notation indicates, the algebra \eqref{quotient-algebra} does not depend on a choice of the presentation \eqref{presentation} of the ideal $\I_U$. Furthermore, it is not  hard to show that all the three structures Koszul differential $\partial_K$, De Rham differential $d_{DR}$ and the bi-degree structure $\mathcal{K}_{W/S,{\bf f}}^{p,s}$ are preserved on the quotient algebra $\overline{\mathcal{K}}_{W/S}^{p,s}$, and that a parallel  statement of the functoriality {\it lemma}s 4.1 and 4.2 hold, too. Then, for each fixed $p\in \Z$, we may also consider the cohomology of the $\partial_\K$. The following  question has quite likely a positive answer.

\noindent
{\bf Question.} Are the cohomology groups of $(\overline{\mathcal{K}}_{W/S}^{p,\star},\partial_\K)$ naturally isomorphic to those of $(\mathcal{K}_{W/S}^{p,\star},\partial_\K)$ (i.e.~to the groups $(\mathcal{H}_\Phi^{\bullet,s}, d_{DR})$ ($s\in \Z_{ge0}$))?
\end{rem}

\smallskip
\begin{rem} 
 In the present paper, we use the complexes $\mathcal{H}^{\bullet,s}_\Phi$ ($s\in \Z_{\ge0}$) only as a supporting actor for the proof of the coherence of the relative De Rham cohomology group of $\Phi$ (see Case 3. of \S5 {\bf D)}). 
But, for their definition, the condition that $\Phi|_{C_\Phi}$ is a proper map is un-necessary.
Therefore, we may expect a wider use of the complexes in future.
\end{rem}

\bigskip
\section{Step 4: Lifting of $\Check{\text{C}}$ech cohomology groups}

In this section, we give a final step of a proof of the Main Theorem: the coherence of the direct image $\R\Phi_*(\Omega^\bullet_{Z/S},d_{Z/S})$  in a neighborhood of any point $t\in S$ for a flat map $\Phi: Z\to S$ with a suitable boundary conditions.

We recall that
at Fact 2 of {\bf Step 2}, we showed that, for any point $t\in S$, there exists a Stein open neighborhood $S^*\subset S$ of $t$ and a finite system of relative charts $\mathfrak{U}:=\{ j_k: U_k\to D_k(1)\times S^*\}_{k=0}^{k^*}$ 
and 
a real number $0\!<\!r^*\!<\!1$ such that teh following holds:

\noindent
1) the intersection relative chart $j_K$ for $K\subset \{0,\cdots,k^*\}$ is complete intersection, 

\noindent
2)  for any Stein open subset $S'\subset S^*$ and $r^*\le \forall r\le 1$, consider the atlas 
$\U(r,S'):=\{ U_k(r,S'):=j_k^{-1}(D_k(r)\times S')\}_{k=0}^{k^*}$ \eqref{atlas1}
and the manifold $Z(r,S')$ $:=\cup_{k=0}^{k^*} U_k(r,S')$ covered by them. 
Then the direct image 
$\R\Phi(\Omega^\bullet_{Z(r,S')/S'})$ are isomorphic to each other for $r$ in $r^*\le r\le 1$.

 \smallskip
 \noindent
The plan of the proof is the following. 

\smallskip
{\bf A)} \ We  express the Hodge to De Rham spectral sequence over any Stein open subset $S'\subset S^*$ in terms of $\Check{\text{C}}$ech cohomology groups with coefficients in $\Omega^\bullet_{Z/S}$ with respect to the atlas $\U(r,S')$ \eqref{atlas1}.

{\bf B)} \ We ``lift" the $\Check{\text{C}}$ech complex to the lifted atlas $\mathfrak{U}(r,S')$  \eqref{atlas2} of relative charts. To be exact, in order to lift the coefficient to Koszul-De Rham algebra $\mathcal{K}_{W/S, \bf f}^{\bullet,\star}$, we need to enhance the atlas to a based lifted atlas $\tilde{\mathfrak{U}}(r,S')$. The existence of such enhancement shown in {\it Lemma} \ref{based-lifting} is a quite non-trivial step in the proof.

{\bf C)} \ We compare the $\Check{\text{C}}$ech complex  of $\Omega^\bullet_{Z/S}$  with that of $\mathcal{K}_{W/S, \bf f}^{\bullet,\star}$ by the morphism $\pi$ \eqref{0th-cohomology} and obtain a short exact sequence where the third term is described again by 
a $\Check{\text{C}}$ech complex with respect to the atlas $\U(r,S')$ and coefficient in the sheaf $\mathcal{H}^{\bullet,*}$  whose support is contained in the critical set $C_\Phi$. 

{\bf D)} \ In the long exact sequence of cohomology groups of the above three $\Check{\text{C}}$ech complexes, two terms (namely,  the first and the third) 
 are independent of the radius $r$. So the cohomology groups of the third, i.e.~of $\mathcal{K}_{W/S, \bf f}^{\bullet,\star}$, is also independent of $r$.

{\bf E)} \  We apply the Forster-Knorr Lemma (see \cite{Forster-Knorr} and also {\it Lemma} 5.2 of present paper) to the $\Check{\text{C}}$ech cohomology groups of $\mathcal{K}_{W/S, \bf f}^{\bullet,\star}$ and see that they give coherent direct image sheaves on a neighborhood $S_m$ of $t\in S$.  
On the other hand, the third term (the cohomology of $\mathcal{H}^{\bullet,*}$) is already coherent on $S_m$ since $C_\Phi$ is proper over $S$.  Thus, the remaining term in the long exact sequence of the cohomologies, that is, the  direct images of the relative De Rham complex  are also coherent on $S_m$.

\bigskip
\noindent
We start the proof now.

\smallskip
\noindent
{\bf A)  $\Check{\text{C}}$ech complex\ }

We consider the $\Check{\text{C}}$ech chain complex of the relative De Rham complex $\Omega^p_{Z(r,S')/S'}$ with respect to  a Stein covering $\U(r,S'):=\{U_k(r,S')\}_{k=0}^{k^*}$ \eqref{atlas1} of $Z(r,S')$ over any Stein open subset $S'\subset S^*$. As usual, the $q$th cochain module ($q\in\Z$) is given by 
\begin{equation}
\begin{array}{c}
\label{double}
\Check{\text{C}}^q(\U(r,S'),\Omega^p_{Z(r,S')/S'})
:= \underset{\substack{K\subset \{0,\cdots,k^*\}\\ \#K=q+1}}{\oplus}\Gamma(U_K(r,S'),\Omega^p_{Z'(r,S')/S'}) . 
\end{array}
\end{equation}
(where the summation index $K$ runs also over the cases when $U_K(r,S')=\emptyset$). The $\Check{\text{C}}$ech coboundary operator is the alternating sum 
\begin{equation}
\label{ceck-coboundary}
\Check{\delta}:=\!\!\sum_{K\subset K'} \pm (\rho^{K'}_K)^* : \  \Check{\text{C}}^q(\U(r,S'),\Omega^p_{Z(r,S')/S'}) \rightarrow \Check{\text{C}}^{q+1}(\U(r,S'),\Omega^p_{Z(r,S')/S'})
\end{equation}
of pull-back morphisms associated to the inclusion map $\rho_K^{K'}: U_{K'}\to U_K$, where $K$ and $K'\subset \{0,\cdots,k^*\}$ are indices satisfing $\#K=q+1$, $\#K'=q+2$ and $K\subset K'$.

\medskip
\noindent
{\bf B) Based lifting atlas\ }

Recall the lifting atlas $\mathfrak{U}(r,S'):=\{ j_k|_{U_k(r,S')} : U_k(r,S')\to D_k(r)\times S'\}_{k=0}^{k^*}$  \eqref{atlas2} of the atlas $\U(r,S')$ \eqref{atlas1}, where each $j_k$ is a pair $(\varphi_k,\Phi)$ of maps such that $\varphi_k$ is a local isomorphism of a neighborhood $B(\underline{z}_k,R(\underline{z}))$ of $\underline{z}_k\in Z$ to a domain in $\C^N$.
  We attach one more structure, i.e.~base (recall Definition 4), 
  to the atlas $\mathfrak{U}(r,S')$ as in the following definition. 

\bigskip
\begin{definition}  {\bf 6.}  A  {\it based lifting atlas} of  $\U(r,S')$ is a triplet 
$$
\widetilde{\mathfrak{U}}(r,S')\   :=\  (\mathfrak{U}(r,S'), {\bf f}_K, {\bf \Pi}_K^{K'}) \qquad 
$$ 
such that 

1) $\mathfrak{U}(r,S') =\{j_k \mid 0\le k \le k^*\}$ is the relative atlas already given in \eqref{atlas2},

2) ${\bf f}_K$ is a minimal generator system of the ideal $\I_{U_K(r,S*)}$ for $K\subset \{0,1,\cdots,k^*\}$. 
That is, $(j_K, {\bf f}_K)$ is a based relative chart in the sense of {\bf Definition 4}. 

3) ${\bf \Pi}_K^{K'}$  is a based morphism: $(j_{K'},{\bf f_{K'}}) \to (j_{K},{\bf f_{K}})$ (in the sense of {\it Lemma} \ref{functorial}) for $K, K'\subset \{0,1,\cdots,k^*\}$ with $K\subset K'$ 
such that  
$$
{\bf \Pi}^{K''}_K={\bf \Pi}^{K'}_K \circ {\bf \Pi}^{K''}_{K'}
$$ 
for any $K,K',K''\subset \{0,1,\cdots,k^*\}$ with $K\subset K'\subset K''$.
\end{definition}

\medskip
We remark that any based reltive chart in a based lifting atlas is automatically a complete intersection in the sense of {\bf Definition 5.}
The following existence of based lifting atlases is one crucial step towards the proof of the Main Theorem.
\bigskip

\begin{lem}
\label{based-lifting}
{\it For the atlas $\mathfrak{U}(r,S')$, there exists a based lifting $\widetilde{\mathfrak{U}}(r,S')$. }
\end{lem}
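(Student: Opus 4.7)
The plan is to construct the based lifting explicitly, reusing the very equations that witnessed complete intersection in Lemma 3.2, and then to pin down compatible coefficient matrices so that the cocycle law in Definition 6 holds on the nose. The construction proceeds in three stages: specifying the generators ${\bf f}_K$, specifying the based morphisms ${\bf \Pi}_K^{K'}$, and reconciling the choices to satisfy ${\bf \Pi}_K^{K''}={\bf \Pi}_K^{K'}\circ {\bf \Pi}_{K'}^{K''}$.

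\emph{Stage 1 (generators).} Fix a total order on $\{0,\ldots,k^*\}$ and, for each nonempty $K$, set $k_0(K):=\min K$. Take ${\bf f}_K$ to be the explicit minimal generating system obtained in the proof of Lemma \ref{complete-intersection-atlas}: the identification equations $z^\nu\!\circ\!\varphi_{k_0(K)}^{-1}-z^\nu\!\circ\!\varphi_k^{-1}$ for $k\in K\setminus\{k_0(K)\}$ together with the graph equations $t_i-\Phi_i\!\circ\!\varphi_{k_0(K)}^{-1}$. By Lemma \ref{complete-intersection-atlas} this is a regular sequence minimally generating $\I_{U_K(r,S')}$, so $(j_K,{\bf f}_K)$ is a complete-intersection based relative chart in the sense of Definition 5.

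\emph{Stage 2 (morphisms).} For $K\subset K'$ let $w_K^{K'}\colon D_{K'}(r)\!\times\!S'\to D_K(r)\!\times\!S'$ be the natural projection (forgetting the $D_k$-factors with $k\in K'\setminus K$); these compose strictly, so the underlying map-part of the prospective $\bf\Pi$'s is automatically functorial. It remains to choose a matrix $h_K^{K'}$ with $(w_K^{K'})^*(f^K_i)=\sum_j(h_K^{K'})_i^j f^{K'}_j$. If $k_0(K)=k_0(K')$, every generator in ${\bf f}_K$ pulls back to a generator in ${\bf f}_{K'}$, so $h_K^{K'}$ has entries in $\{0,1\}$ and is essentially an inclusion of index sets. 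If $k_0(K)\neq k_0(K')$ (necessarily $k_0(K')<k_0(K)$), the identification equations still reorganize into $\pm 1$-combinations of identification equations of $K'$, but each graph equation produces an additional term $\Phi_i\!\circ\!\varphi_{k_0(K')}^{-1}-\Phi_i\!\circ\!\varphi_{k_0(K)}^{-1}$, which vanishes on $U_{K'}$ and must therefore be expanded, via a Hadamard-type formula in the common ambient chart supplied by Lemma \ref{complete-intersection-atlas}, as a holomorphic combination of the identification equations of ${\bf f}_{K'}$.

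\emph{Stage 3 (cocycle condition — the main obstacle).} The remaining task is the matrix identity $h_K^{K''}=\bigl[(w_{K'}^{K''})^*h_K^{K'}\bigr]\cdot h_{K'}^{K''}$ dictated by Lemma \ref{functorial}. This is the delicate point, because the Hadamard expansions used in Stage 2 are not unique and two a priori innocuous choices may fail strict associativity; in particular, when base points $k_0(K),k_0(K'),k_0(K'')$ are pairwise distinct, the triangle inequality in the Hadamard rewriting must close exactly. My plan is to fix a \emph{canonical} expansion: working in the common ambient $\C^N$-chart of Lemma \ref{complete-intersection-atlas} (the ball $B(\underline z_0,R(\underline z_0)(1-\varepsilon))$), write
\[
\Phi_i\!\circ\!\varphi_a^{-1}(z)-\Phi_i\!\circ\!\varphi_b^{-1}(w)=\sum_\nu (z^\nu-w^\nu)\int_0^1 \partial_\nu\bigl(\Phi_i\!\circ\!\varphi_a^{-1}\bigr)\bigl(w+s(z-w)\bigr)\,ds + R(z,w),
\]
where $R$ collects a further Hadamard remainder in the difference $\varphi_a^{-1}-\varphi_b^{-1}$, and verify the cocycle by the standard path-concatenation identity for such integrals. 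Should this direct route prove unwieldy, the fallback is a descending induction on $|K'\setminus K|$: the obstruction to extending a partial based lifting to the next stratum lives in a module of Koszul syzygies attached to the regular sequence ${\bf f}_{K'}$ on the Stein set $D_{K'}(r)\times S'$, and its first Čech cohomology over our Stein cover vanishes by Cartan B, allowing the required rectification. In either implementation, the heart of the difficulty is precisely the reconciliation of the Hadamard ambiguity with strict functoriality of ${\bf \Pi}^\diamond$.
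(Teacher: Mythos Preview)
Your Stages 1 and 2 match the paper's construction almost exactly (the paper takes consecutive differences $z^j\circ\varphi_k^{-1}-z^j\circ\varphi_{k'}^{-1}$ rather than anchoring everything at $k_0(K)$, but this is an immaterial change of basis). Where you diverge is Stage 3, which you correctly flag as the crux but then overcomplicate.

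The paper's resolution is simpler than either your primary or fallback plan. The point is that in the common ambient $\C^N$-chart supplied by {\it Lemma}~3.3, $\Phi_i$ is a \emph{single} holomorphic function $\Phi_i(\underline z)$ of the coordinates $z^1,\dots,z^N$; the various $\Phi_i\circ\varphi_k^{-1}$ are just its pullbacks along the projections $D_K\times S'\to D_k$. One therefore fixes, once and for all on the Stein product domain, functions $F_{ij}$ with
\[
\Phi_i(\underline z')-\Phi_i(\underline z)=\sum_{j=1}^N F_{ij}(\underline z',\underline z)\,(z'^{\,j}-z^j),
\]
and builds every block $h^{K',I}_{K,II}$ from these \emph{same} $F_{ij}$ (evaluated at the relevant pair of projections). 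The cocycle identity $h_K^{K''}=h_K^{K'}h_{K'}^{K''}$ on the only nontrivial block then reduces to the telescoping identity
\[
\sum_j F_{ij}(\underline z'',\underline z)(z''^{\,j}-z^j)
=\sum_j F_{ij}(\underline z',\underline z)(z'^{\,j}-z^j)
+\sum_j F_{ij}(\underline z'',\underline z')(z''^{\,j}-z'^{\,j}),
\]
which holds because both sides equal $\Phi_i(\underline z'')-\Phi_i(\underline z)$. No path-concatenation subtlety, no remainder $R$, and no cohomological rectification is needed. Your displayed integral formula is muddled precisely because you Hadamard-expand $\Phi_i\circ\varphi_a^{-1}$ (a function on $D_a$) rather than $\Phi_i$ itself, which forces an artificial asymmetry between $a$ and $b$ and the spurious term $R(z,w)$. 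The Cartan~B fallback is both unnecessary and dubious: the associativity constraint is multiplicative in the $h$-matrices, so the failure does not sit in a linear \v Cech complex in any obvious way.
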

\begin{proof} We construct the based lifting explicitly in the following 1), 2) and 3).

\medskip
\noindent
1) Recall the notation of the proof of {\it Lemma} \ref{complete-intersection-atlas}. For any subset $K\subset \{0,1,\cdots,k^*\}$, we have 
$j_K : 
\underline{z}' \in U_K \mapsto ( (\varphi_{z_i}(\underline{z}'))_{i=0}^{k^*}, \Phi(\underline{z}'))\in \prod_{i=0}^{k^*}D_{z_i}(r_{z_i}) \times S_K$.  

\medskip
\noindent
2)  As was suggested already by 1) and 2) in the proof of {\it Lemma} \ref{complete-intersection-atlas} (page 6), we choose ${\bf f}_K$ as follows.

1.  If $U_K=\emptyset$, then we set  ${\bf f}_K=\{\ 1\ \}$.

2.  If $U_K\not=\emptyset$, then  ${\bf f}_K$ is the union of two parts ${\bf f}_{K,I}$ and ${\bf f}_{K,II}$ where
$$
{\bf f}_{K,I}= \left\{z^j\circ \varphi^{-1}_{k} - z^j\circ \varphi^{-1}_{k'} \right\}_{\substack{j=1, k,k'\in K}}^N \ \ \&\ \ {\bf f}_{K,II}=\left\{t_i-\Phi_i\circ \varphi_{k_0}^{-1} \right\}_{i=1}^{\dim_\C S},
$$
where  $k'=$the least element of $K$ which is larger than $k$, and  $k_0=\min\{ K\}$. 

\medskip
\noindent
3) Let  $K,K'\subset \{0,1,\cdots,k^*\}$ such that $K\subset K'$. We construct a morphism $\Pi_K^{K'}=(\pi_K^{K'},h_K^{K'}): (j_{K'},{\bf f}_{K'})\to (j_K,{\bf f}_K)$.  As a map $\pi_K^{K'}$ from the relative chart  $j_{K'}$ to $j_{K}$, we 
consider the pair consisting of natural projection: $\pi^{K'}_K\!\! :\! D_{K'}(r)\! \times\! S' \! \to \!  D_K(r)\! \times\!  S'$ and the natural (induced) inclusion: $U_{K'}\to U_K$. 

Let us choose and fix a morphism $h_K^{K'}$ between two basis ${\bf f}_K$  and ${\bf f}_{K'}$.  

In case $U_{K'}=\emptyset$, ${\bf f}_{K'}=\{1\}$ and we set  $h_K^1={\bf f}_K$. 

In case $U_{K'}\not=\emptyset$, then, according to the two groups of basis of 
${\bf f}_{K}$ and ${\bf f}_{K'}$
in the above 1), we decompose the matrix $h_K^{K'}$ into 4 blocks {\footnotesize $\left(\!\!\!\!\begin{array}{c}h_{K,I}^{K',I}, \ h_{K,I}^{K',II} \\ h_{K,II}^{K',I}, \ h_{K,II}^{K',II} \end{array}\!\!\!\! \right)$}, and fix the morphism blockwise in the following steps 1., 2. and 3.  

\smallskip
1.  There is a unique way to express any element of ${\bf f}_{K,I}$ as a sum of  elements of ${\bf f}_{K',I}$, then $h_{K,I}^{K',I} $ is its coefficients matrix. Thus we get: ${\bf f}_{K,I}=h_{K,I}^{K',I} {\bf f}_{K',I}$.

2.  We put $h_{K,I}^{K',II} = 0$.

3. We express ${\bf f}_{K,II} = h_{K,II}^{K',I} {\bf f}_{K',I} + h_{K,II}^{K',II} {\bf f}_{K',II}$, where $h_{K,II}^{K',II}$ is the identity matrix of size $\dim_\C S$. In order to fix the part $h_{K,II}^{K',I}$, we prepare some functions. 

For each $i$ with $1\le i\le \dim_\C S$, we express, locally in a Stein coordinate neighborhood,  $\Phi_i$ (the $i$th component of the map $\Phi$) as a function $\Phi_i(\underline z)$ of $N$ variables $\underline z=(z^1,\cdots, z^N)$. Consider a copy $\Phi_i(\underline{z}')$ of the function for a coordinate system $\underline{z}'=({z'}^1,\cdots, {z'}^N)$. Then, on the product domain of the coordinate  neighborhood, we can find functions $F_{ij}(\underline z,\underline z')$ ($j=1,\cdots,N$) such that
\begin{equation}
\label{difference}
\Phi_i({\underline z}') - \Phi_i({\underline z}) \quad =\quad \sum_{j=1}^N F_{ij}(\underline z ',\underline z) ({z'}^j-{z}^j),
\end{equation}
since the product domain is Stein where the ideal defining the diagonal is globally generated by ${z'}^j \!- \!{z}^j$ ($j\! =\! 1,\!\cdots\!,N$). Then, again taking a copy $\Phi_i(\underline{z}'')$ on the triple product domain and summing up two copies of above formula, we obtain a formula
\begin{equation}
\label{addition-formula}
\sum_{j=1}^N F_{ij}(\underline z'',\underline z) ({z''}^j-{z}^j) 
 = \sum_{j=1}^N F_{ij}(\underline z',\underline z) ({z'}^j-{z}^j) +  
\sum_{j=1}^N F_{ij}(\underline z'',\underline z) ({z''}^j-{z'}^j).
\end{equation}

\medskip
We return to the construction of the matrix $h_{K,II}^{K',I}$.
That is, we need to express the difference:
$
(t_i-\Phi_i\circ \varphi_{k_0}^{-1})-(t_i-\Phi_i\circ \varphi_{k'_0}^{-1}) =
\Phi_i\circ \varphi_{k'_0}^{-1} -\Phi_i\circ \varphi_{k_0}^{-1} 
$
as a linear combination of $z^j\circ \varphi_{k_0'}^{-1}-z^j\circ \varphi_{k_0}^{-1}$. The formula \eqref{difference} gives an answer:
$$
\Phi_i\circ \varphi_{k'_0}^{-1} -\Phi_i\circ \varphi_{k_0}^{-1}  
\quad =\quad 
 \sum_{j=1}^N F_{ij}(\varphi_{k_0'}^{-1},\varphi_{k_0}^{-1}) (z^j\circ\varphi_{k_0'}^{-1}-z^j\circ\varphi_{k_0}^{-1}), 
 $$
and we obtain the definition:  $h_{K,II}^{K',I}=\{ F_{ij}(\varphi_{k_0'}^{-1},\varphi_{k_0}^{-1}) \}_{i=1,\cdots,\dim_\C S,\ j=1,\cdots,N}$.

\medskip
Finally, we need to show that the above defined matrix  satisfies the functoriality
$h_{K}^{K''}=h_{K}^{K'}h_{K'}^{K''}$.  We can prove this again by decomposing the matrix into 4 blocks, where the cases of the blocks 
{\footnotesize $\left(\!\!\! \begin{array}{c} I\\ I\end{array} \!\!\! \right)$, $\left(\!\!\! \begin{array}{c} II\\ I\end{array} \!\!\!\right)$ } 
and 
{\footnotesize $\left(\!\!\! \begin{array}{c} II\\ II\end{array} \!\!\!\right)$}
 are trivial. The case of block 
 {\footnotesize $\left(\!\!\! \begin{array}{c} I\\ II\end{array} \!\!\!\right)$} 
 follows from the addition formula \eqref{addition-formula}.

This completes the proof of an existence of based lifting of the atlas $\U(r,S')$.
\end{proof}

\begin{rem} The above construction does not give a canonical lifting, but depends on the choices of the decomposition \eqref{difference} which is based on rather an abstract existence theorem (cf.~\cite{G-P-R}). We don't know the meaning of this freedom to the De Rham cohomology group we are studying. As  we see in sequel, for the proof of coherence, any choice of the lifting does work. See also Remark \ref{Hpq-remark}.
\end{rem}

\medskip 
From now on, we consider the base lifted atlas $\widetilde{\mathfrak{U}}(r,S')$ for all $r^*\le r\le 1$ and Stein open subset $S'\subset S^*$, depending on a choice of functions $F_{ij}$ in \eqref{difference}. 
Since we, later on, want to compare them for different $r$ and $S'$, we first fix the functions $F_{ij}$ and hence a based lifting $({\bf f}_K, {\bf \Pi}_K^{K'})$ on the largest atlas  $\mathfrak{U}(1,S^*)$,  then we consider the induced based lifting to any atlas 
 $\mathfrak{U}(r,S')$.

We lift the $\Check{\text{C}}$ech (co)chain complex \eqref{double} to the following triple chain complex. Namely,  for $p,q,s\in \Z_{\ge0}$,  we define the cochain module 
\begin{equation}
\label{triple}
\Check{\text{C}}^{q}(\widetilde{\mathfrak{U}}(r,S'), \K^{p,s}_\Phi)
:= \underset{\substack{K\subset \{0,\cdots,k^*\}\\ \#K=q+1}}{\oplus}\Gamma(D_K(r)\times S',\K_{D_K(r)\times S'/S',{\bf f}_K}^{p,s}).
\footnote{
In the notation of LHS, we replaced the subscript like $D_K(r)\! \times\! S/S$ indicating where the module is defined by $\Phi$, since we may regard $\K^{p,s}_\Phi$ to be a sheaf satisfying the functoriality ({\it Lemma} \ref{functorial}) defined on all relative charts, depending on the choice of a based lifting in {\it Lemma} \ref{based-lifting}.
}
\end{equation}
The actions of (co-)boundary operators $d_{DR}$ and 
$\partial_\K$ on the coefficient $\K^{\bullet,\star}_\Phi$ preserve the chart, so that they induce a double complex structure  
$(\Check{\text{C}}^{q}(\widetilde{\mathfrak{U}}(r,S'), \K^{\bullet,\star}_\Phi),d_{DR},d_{\K})$. 
We now lift the $\Check{\text{C}}$ech coboundary operator on \eqref{double} 
 to the lifted module \eqref{triple}.

\bigskip
For $p,q,s\in\Z$,  we introduce an {\it $\Gamma(S',\O_{S})$-homomorphism}
\begin{equation} 
\label{sign1}
\Check{\delta}  := \sum_{K\subset K'} \pm (\Pi^{K'}_K)^\diamond   \ : \ 
\Check{\text{C}}^{q}(\widetilde{\mathfrak{U}}(r,S'),\K^{p,s}_\Phi) \longrightarrow \Check{\text{C}}^{q+1}(\widetilde{\mathfrak{U}}(r,S'),\K^{p,s}_\Phi),
\end{equation} 
where 
$(\Pi^{K'}_K)^\diamond$ is the pull-back morphism (4.6) 
in {\it Lemma} \ref{functorial} associated with the morphism $\Pi_K^{K'}=(\pi_K^{K'},h_K^{K'})$  given in 3) of the proof of {\it Lemma} \ref{based-lifting},
and 
the sign and the running index $K$ and $K'$ are the same as those for the $\Check{\text{C}}$ech coboundary operator \eqref{ceck-coboundary}.  
We shall call this morphism the {\it lifted $\Check{\mathrm{C}}$ech coboundary operator}.  
 The lifted  $\Check{\mathrm{C}}$ech coboundary operator satisfies the relations:}
$$
\Check{\delta}^2=0, \quad 
\Check{\delta} d_{DR}+d_{DR}\Check{\delta}=0, \ \ \text{  and   }\ \  \Check{\delta} \partial_\K + \partial_\K\Check{\delta}=0.
$$
{\it Proof.}  To show that $\Check{\delta}^2=0$ is the same calculation as the standard $\Check{\text{C}}$ech coboundary case. Other relations follow from the fact that the pull-back homomorphism $(\pi_K^{K'})^\diamond$ 
commutes with $d_{DR}$ and $\partial_\K$ ({\it Lemma} \ref{functorial}).   \qquad $\Box$

\bigskip
\noindent
{\bf C) \ Comparison of the triple $\Check{\text{C}}$ech-complex of $\K_{\Phi}^{\bullet,\star}$ with the double $\Check{\text{C}}$ech-complex of $\Omega^\bullet_{\Phi}$} 

We compare the triple complex \eqref{triple} with the double complex \eqref{double}. More exactly,  for our restricted purpose (to calculate the second page of the Hodge to De Rham spectral sequence), we fix the index $p$ for the chain complex for De Rham differential operator. That is, we compare only the remaining double complex of the two coboundary operators $(\Check{\delta},\partial_\K)$ with the $\Check{\text{C}}$ech (co)chain complex of the coboundary operator $\Check{\delta}$. 
The comparison is achieved by the morphism $\pi$ (recall \S4 {\bf B)}).  
$$\cdots
\overset{\partial_\K}{\longrightarrow}
\Check{\text{C}}^{\ \!q}\!(\widetilde{\mathfrak{U}}(r,S'),\K^{p,1}_\Phi)  
\overset{\partial_\K}{\longrightarrow}
\Check{\text{C}}^{\ \!q}\!(\widetilde{\mathfrak{U}}(r,S'),\K^{p,0}_\Phi)  
\overset{\pi}{\longrightarrow} \Check{\text{C}}^{\ \!q}\!(\U,\Omega^p_{Z(r,S')/S'}) \to 0 .
$$
The commutativity of $\pi$ with  the lifted and un-lifted $\Check{\text{C}}$ech coboundary operator is, termwise, equivalent to the commutativity  $\rho^{U_K}_{U_{K'}}\circ \pi=\pi' \circ (\pi^{K'}_K)^\diamond$ \ \eqref{pi-natural}.

\bigskip

Let us consider the total complex of \eqref{triple} with respect to $\Check{\delta}$ and $\partial_\K$   by putting $\tilde *:=*-\star$ and $\tilde\partial:= \Check{\delta}+\partial_\K$ :
\begin{equation}
\label{total}
(Tot^{\tilde *}\Check{\text{C}}^{.}\!(\widetilde{\mathfrak{U}}(r,S'),\K^{p,.}_\Phi), \tilde\partial) \ \end{equation}
where
\begin{equation}
\label{total2}
Tot^{\tilde *}\Check{\text{C}}^{.}\!(\widetilde{\mathfrak{U}}(r,S'),\K^{p,.}_\Phi):=
\underset{\substack{ 
*-\star= \tilde *}  }{\oplus}
\Check{\text{C}}^{*}\!(\widetilde{\mathfrak{U}}(r,S'),\K^{p,\star}_\Phi).
\end{equation}

In view of \eqref{0th-cohomology}, for each fixed $p\in \Z$, the chain morphism 
\begin{equation}
\label{pi-final}
 (Tot^{\tilde *}\Check{\text{C}}^{.}\!(\widetilde{\mathfrak{U}}(r,S'),\K^{p,.}_\Phi), \tilde\partial) \ \overset{\pi}{\longrightarrow} \ 
(\Check{\text{C}}^*(\U,\Omega^p_{Z(r,S')/S'}), \Check{\delta}) 
\end{equation}
is an epimorphism in the category of cochain complexes. So, using the kernel of it, we obtain a short exact sequence:
\begin{equation}
\label{short}
\begin{array}{clr}
0 \to 
(Tot^{\tilde *}\Check{\text{C}}^{.}\!(\widetilde{\mathfrak{U}}(r,S'),\K^{p,.}_{\Phi,\ker(\pi)}), \tilde\partial) 
\! &\!\!\!  \overset{\iota}{\longrightarrow} 
(Tot^{\tilde *}\Check{\text{C}}^{.}\!(\widetilde{\mathfrak{U}}(r,S'),\K^{p,.}_\Phi), \tilde\partial) \\
&  \overset{\pi}
{\longrightarrow} \ 
(\Check{\text{C}}^*(\U,\Omega^p_{Z(r,S')/S'}), \Check{\delta}) \   \to \ 0, \!\!\!\!
\end{array}
\end{equation}
where the kernel (the first term) is again the total complex of a lifted $\check{\mathrm{C}}$ech chain complex of the atlas $\widetilde{\mathfrak{U}}(r,S')$ with coefficients in a complex $\K^{p,s}_{\Phi,\ker(\pi)}$ (for fixed $p$), which is the sub-complex of $\K^{p,s}_\Phi$ obtainded by replacing the first term $\K^{p,0}_\Phi$ by the term $\partial_\K({\K}^{p,1}_\Phi)= \ker(\pi: 
\K^{p,0}_\Phi\to \Omega^p_\Phi)$, and $\iota$ is the map induced from the natural inclusion $\K^{p,s}_{\Phi,\ker(\pi)} \subset \K^{p,s}_\Phi$. 

Due to the commutativity of $\pi$ with the De Rham differential operator \eqref{pi-deRham}, the chain maps  \eqref{pi-final} commute with De Rham operator action between the modules for the indices  $p$ and $p+1$. That is, by taking the direct sum over the index $p\in\Z$, we may regard $\pi$ as an epimorphism from the  double complex of $(\tilde\partial,d_{DR})$ to the double complex of $(\Check{\delta},d_{Z/S})$. Then, similarly, by taking the direct sum of the sequences \eqref{short} over the index $p\in \Z$, we obtain a short exact sequence of double complexes.

Before calculating  cohomology long exact sequence of the short exact sequence, we show in the following lemma some {\it finiteness} and {\it boundedness} of  the total complex \eqref{total} (considered as a double complex of the indices $\tilde *$ and $p$), which makes big contrast with the case of {\it Lemma} 4.2. Namely, in case of the total complex of $\partial_\K$ and $d_{DR}$, we did not get such finiteness and boundedness  (see Remark \ref{bound2}).  This finiteness, which holds for the total complex of $\partial_K$ and $\Check{\delta}$, is one of the most subtle but the key point where Koszul-De Rham algebra works mysteriously.

\bigskip
\noindent
\begin{lem}
\label{bound}
  {\it The complex \eqref{total} is finite and bounded in the following two senses.

\smallskip  
i)  The RHS  of  \eqref{total2} for fixed $p$ and $\tilde *$ is a finite direct sum of the form
$$
\overset{k^*-1}{ \underset{q=-1  }{\oplus} }\ 
\Check{\text{C}}^{q}\!(\widetilde{\mathfrak{U}}(r,S'),\K^{p,q-\tilde*}_\Phi)
 = 
 \underset{\substack{K\subset \{0,\cdots,k^*\}}}{\oplus}\Gamma(D_K(r)\times S',K_{D_K(r)\times S'/S',{\bf f}_K}^{p,\#K-\tilde{*}-1}).
$$

ii)  The set $\{ (p,\tilde *)\in \Z^2\mid Tot^{\tilde *}\Check{\text{C}}^{.}\!(\widetilde{\mathfrak{U}}(r,S'),\K_\Phi^{p,.})\not=0\}$ is contained in a strip }
\begin{equation}
\label{bound3}
- (k^*+1)(N-1) +n-1 \ \le \ \tilde * + p \ \le \ (k^*+1)(N+1) -1.
\end{equation}
\end{lem}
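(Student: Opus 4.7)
The plan is to reduce both assertions to elementary bookkeeping: part (i) follows from the combinatorial finiteness of the atlas ($\#K\le k^*+1$), and part (ii) from the bi-degree strip bound of \emph{Lemma} 4.4, applied to each intersection chart $(j_K,{\bf f}_K)$.

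For part (i), I would simply unpack the definition. In the direct sum $Tot^{\tilde *}\check{\text{C}}^{.} = \bigoplus_{*-\star=\tilde *}\check{\text{C}}^{*}(\widetilde{\mathfrak{U}}(r,S'),\K^{p,\star}_\Phi)$, a summand is zero unless $*\ge\tilde *$ (to force $\star=*-\tilde *\ge 0$) and $*\le k^*$ (since $\check{\text{C}}^{q}$ is supported on subsets $K\subset\{0,\dots,k^*\}$ of cardinality $q+1$). Writing $q=\#K-1$, this collapses to the finite direct sum over $K$ displayed in (i), with Koszul coefficient degree $\#K-\tilde *-1$. Finiteness is then immediate from $\#K\le k^*+1$.

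For part (ii), I would apply the strip bound of \emph{Lemma} 4.4 to each summand: nonvanishing of $\K^{p,s}_{D_K(r)\times S'/S',{\bf f}_K}$ forces $-l_K\le p-s\le\dim_\C(D_K(r)\times S'/S')$, the right-hand side being the relative dimension of the chart. From the atlas construction in Step 2, each polycylinder $D_k$ sits in $\C^N$ with $N=\dim_\C Z=n+\dim_\C S$, so $\dim_\C D_K(r)=\#K\cdot N$, and by the remark following \emph{Lemma} 3.1 the codimension of the image is $l_K=\#K\cdot\dim_\C S+(\#K-1)n$. Substituting $s=\#K-\tilde *-1$ and adding $\#K-1$ throughout yields
\begin{equation*}
-l_K+\#K-1 \;\le\; \tilde *+p \;\le\; \#K(N+1)-1.
\end{equation*}
It remains to optimize over $1\le\#K\le k^*+1$. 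The upper bound is monotone increasing in $\#K$, with maximum $(k^*+1)(N+1)-1$. For the lower bound, a short substitution using $N=n+\dim_\C S$ yields the identity $-l_K+\#K-1=-\#K(N-1)+n-1$, which is monotone decreasing in $\#K$ (since $N\ge 1$, by the standing assumption $n>0$ from Remark 1.1) and is therefore minimized at $\#K=k^*+1$, giving precisely $-(k^*+1)(N-1)+n-1$.

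The argument is essentially dimension arithmetic; the only point that requires a little care is to apply the upper bound in \emph{Lemma} 4.4 with the \emph{relative} top form degree, i.e.\ $\dim_\C(W/S')$ rather than $\dim_\C W$, which is what makes the upper strip bound emerge cleanly as $(k^*+1)(N+1)-1$ without an extraneous $\dim_\C S$. Beyond this, there is no substantive obstacle: everything is controlled by the pointwise strip bound of the Koszul-De Rham algebra combined with the combinatorial cap $\#K\le k^*+1$ from the finiteness of the atlas of Step 2.
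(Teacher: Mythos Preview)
Your argument is correct and follows essentially the same route as the paper's: part (i) is the finiteness of subsets $K\subset\{0,\dots,k^*\}$, and part (ii) is {\it Lemma}~4.4 applied chartwise followed by the same arithmetic substitution $s=\#K-1-\tilde *$, $l_K=\#K\cdot\dim_\C S+(\#K-1)n$, $\dim_\C D_K(r)=\#K\cdot N$, and optimization over $\#K$. Your remark that the upper bound in {\it Lemma}~4.4 should really be read as the relative dimension $\dim_\C(W/S)$ is well taken; the paper states that lemma with $\dim_\C W$ but, in the present application, silently uses $\dim_\C D_K(r)$, which is exactly the relative dimension you single out.
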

\begin{proof}  i)  The summation index $K\subset \{0,\cdots,k^*\}$ \eqref{triple} runs over a finite set so that $*=\#K-1$ is bounded. Then the condition that $*-\star= \tilde *$ is fixed means that the range of $\star$ is bounded. 

\smallskip
\noindent
ii)  Recall $Tot^{\tilde *}\Check{\text{C}}^{.}\!(\widetilde{\mathfrak{U}}(r,S'),\K^{p,.}_\Phi):=
\underset{*-\star= \tilde *}{\oplus} \underset{\#K=*+1}{\oplus}\Gamma(D_K(r)\times S',\K_{D_K(r)\times S'/S'}^{p,\star})$. If there is a non-vanishing term in RHS for some $*$,  $\star$ and $K$, then due to \eqref{bound1}, one has $-l_K\le p-\star \le \dim_\C D_K(r)$. Then, adding $*=\star+\tilde *$ in both hand sides, we have $-l_K+* \le \tilde *+p\le \dim_\C D_K(r)+*$.
Since $*=\#K-1$ and $l_K=\#K\cdot\dim_{\C}S+(\#K-1)n$, $\dim_{\C}D_K(r)=\#K\cdot N$ (recall \S3) and $\dim_{\C}Z=N=m=n+\dim_{\C}S$,
we get
$$
-\#K(N-1)+n-1 \ \le \ \tilde *+p \ \le \ \#K(N+1)-1
$$
Since the index $K$ runs over all subsets of $\{0,1,\dots,k^*\}$, we obtain the formula.
\end{proof}

According to i) and ii) of {\it Lemma} \ref{bound}, we have two important consequences: i) the cohomology groups is described by a finite chain complex where each chain module is a finite direct sum of the spaces of holomorphic functions on some relative charts (this descriptions is necessary to apply the Forster-Knorr Lemma), and ii) for each fixed $p$, the complex is bounded. This observation leads us to introduce the following truncation of the double complexes.
\begin{equation}
\label{truncation}
\begin{array} {ccll}
TR^{p,\tilde *} & := &
\begin {cases}
\begin{array}{cl}
Tot^{\tilde *}\Check{\text{C}}^{.}\!(\widetilde{\mathfrak{U}}(r,S'),\K^{p,.}_\Phi)  &\qquad \text{if \ $0\le p \le \dim_\C  Z$}\\
0 &\qquad  \text{otherwise}
\end{array}
\end{cases} \\
TR_{\ker(\pi)}^{p,\tilde *} & := &
\begin {cases}
\begin{array}{cl}
Tot^{\tilde *}\Check{\text{C}}^{.}\!(\widetilde{\mathfrak{U}}(r,S'),\K^{p,.}_{\Phi,\ker(\pi)})  & \text{if \ $0\le p \le \dim_\C  Z$}\\
0 & \text{otherwise}
\end{array}
\end{cases} \\
\end{array}
\end{equation}

For the truncated  double complexes, we have

i)  The complexes $TR^{p,\tilde *}$ and $TR_{\ker(\pi)}^{p,\tilde *}$ are bounded for the both indices $p$ and $\tilde *$, and also from above and below.

ii)  The following  is an exact sequence of bounded double complexes:
\begin{equation}
\label{goal}
0 \rightarrow  (TR_{\ker(\pi)}^{p,\tilde *}, d_{DR},\tilde\partial) \overset{\iota}{\longrightarrow} (TR^{p,\tilde *},d_{DR},\tilde\partial)
  \overset{\pi}
{\longrightarrow} \ 
(\Check{\text{C}}^*(\U,\Omega^p_{Z(r,S')/S'}), d_{Z/W},\Check{\delta}) \   \to \ 0, \
\end{equation}
which is the goal of our construction.   From now on, we start to analyze the sequence.

\bigskip
\noindent
{\bf D)  Long exact seqence of images on $S$.}

We consider now the long exact sequence of the cohomology group associated to the short exact  sequence obtained from \eqref{goal} by taking the total complex for each of the three double complexes. Recalling the construction of the atlases $\U(r,S')$ \eqref{atlas1} and $\mathfrak{U}(r,S')$ \eqref{atlas2}, we note that each term of the sequence depends on the choice of a set $S'$ and a real number $r$ with $r^*\le r \le 1$. By fixing $r$ and running $S'$ over all Stein open subset of $S^*$, we obtain a sheaf on $S^*$ (depending on $r$). 

In the following, we analyze the module (sheaf or its sections over $S'$) of the cohomology groups with the three coefficients cases separately.

\medskip
\noindent
{\bf Case 1.} $(\Check{\text{C}}^*(\U,\Omega^p_{Z(r,S')/S'}), d_{Z/W},\Check{\delta})$.

The module is exactly the module of relative De Rham hyper-cohomology group $\R \Phi_*\!(\Omega_{Z_{S^*}/{S^*}}^\bullet,\!d_{Z_{S^*}/{S^*}}) $ \eqref{r-independent} of the morphism $\Phi: Z(r) \to S^*$.  It is shown that the module is independent of the choice of $r$ with $r^*\le r\le 1$.

\smallskip
\noindent
{\bf Case 2.} $(TR^{p,\tilde *},d_{DR},\tilde\partial)$.

Due to the finiteness {\it Lemma} \ref{bound} i) and the boundedness of the double complex, the cohomology group is expressed as a cohomology group of a  finite complex, where each chain module is a finite direct sum of a module of the form 
$ \Gamma(D(r)\times S', \O_{D(r)\times S'}) $ for some polydisc $D(r)$ of radius $r$.

\noindent
{\it Proof.} Recall the direct sum decomposition \eqref{p-s-term}. Noting that $W/S$ is given by $D_K(r)\times S'/S'$ and we obtain $\Omega^p_{W/S}=\oplus_{i_1<\cdots <i_p}  \O_{D_K(r)\times S'} dz_{i_1}\wedge \cdots \wedge dz_{i_p}$ for a coordinate system $\underline{z}$ of the polydisc $D_K(r)$. $\Box$

\smallskip
\noindent
{\bf Case 3.} $ (TR_{\ker(\pi)}^{p,\tilde *}, d_{DR},\tilde\partial)$.

We approach the cohomology group of this case by a use of the spectral sequence of the double complex w.r.t. $d_{DR}$ and $\tilde\partial$. Let us  first calculate the cohomology group of the double complex with respect to the coboundary operator $\tilde\partial$ (in order to avoid a confusion, let us call the spectral sequence $E_I$). Thus, each entry of  the first page of the spectral sequence $E_I$ is again the total cohomology group of the total complex 
$(Tot^{\tilde *}\Check{\text{C}}^{.}\!(\widetilde{\mathfrak{U}}(r,S'),\K^{\bullet,.}_{\Phi,\ker(\pi)}), \tilde\partial=\Check{\delta}+\partial_K) $. Again we approach the group from the spectral sequence of the double complex w.r.t. $\Check{\delta}$ and $\partial_K$. 

Let us first consider the spectral sequence obtained by considering the cohomology group with respect to the coboundary operator $\partial_K$ first, for the reason below (let us call this spectral sequence $E_{II}$). 

  Recall {\it Lemma} \ref{Hpq} that it was shown that there exists a sequence of complexes $\mathcal{H}^{\bullet,s}_{\Phi}$ ($s\in \Z$) of coherent $\O_Z$--modules such that 1) the restriction of the $s$-th complex to $U_W$ induces a natural isomorphism to the $s$-th cohomology group of the Koszul-De Rham double complex $(\K^{\bullet,\star}_{W/S},d_{DR},\partial_K)$ with respect to the coboundary operator $\partial_K$, and 2) the support of the module for $s>0$ is contained in the critical set $C_\Phi$. 
 Thus, the $(q,s)$-entries of $E_{II}$ is given by direct images $\Check{\text{C}}^{q}\!(\widetilde{\mathfrak{U}}(r,S'),\mathcal{H}^{\bullet,s}_{\Phi}) $ of coherent sheaves 
  $\mathcal{H}^{\bullet,s}_{\Phi}$ (the fact that the pair $d_{DR}$ and $\Check{\delta}$ forms a double complex structure on $\oplus_{p,q} \Check{\text{C}}^{q}(\U(r,S'),\mathcal{H}_{\Phi}^{p,s}) $ is verified by a routine). 
  In view of the fact that $C_\Phi\subset Z'$ is proper over the base space $S$, this, in particular, implies that 1) the entry is 
  independent of $r$, and 2) the sheaf obtained by running $S'$ over all Stein open subset of $S^*$ is an $\O_{S^*}$-coherent module.  Then, these two properties should be inherited by the limit of the spectral sequence $E_{II}$ and the associated total cohomology group. 
  
  Coming back to the spectral sequence $E_I$, we see that all the entries of the first page of $E_I$ have the above properties 1) and 2). Thus the cohomology group of the total complex of the double complex  $ (TR_{\ker(\pi)}^{p,\tilde *}, d_{DR},\tilde\partial)$ should have the property.  Then in view of the long exact sequence, we started, two terms Case 1. and 3. of them (as a triangle) are independent of $r$. Thus, we conclude that the third term Case 2. satisfies:
  
{\it The total complex of the double complex $(TR^{p,\tilde *},d_{DR},\tilde\partial)$ is quasi-isomorphic to each other for $r$ and $r'$ with $r^*\le r, r'\le 1$. }

\bigskip
\noindent
{\bf  E)  Application of the Forster-Knorr  Lemma.} 

We are now able to apply the  following  key Lemma  due to Forster and Knorr \cite{Forster-Knorr} \cite{Knorr} (the formulation here of the result is taken from their unpublished note which is slightly modified from the published one, however can be deduced). 

 \medskip
\begin{lem} (Forster-Knorr) 
\label{Forster-Knorr}
{\it
Let $m$ be a given integer, ${S}$ a smooth complex manifold, $0$ a point in ${S}$.  
Suppose that $(C^{*}(r),d)$ is a complex of $\O_{{S}}$-modules bounded from the left such that 

i)  for any Stein open subset $S' \subset {S}$ and $q\in\Z$, we have an isomorphism
\[
C^q(r)(S') \simeq \prod_{finite} \Gamma(D(r)\times S', \O_{D(r)\times S'}) 
\]
together with the Fr\'echet topology. Here, $D(r)$ is a polycylinder of radius $r\in \R_{>0}$ whose dimension varies depending on each factor.

ii) $d: C^q(r)\to C^{q+1}(r)$ is an $\O_{{S}}$-homomorphism, which is continuous with respect to the Fr\'echet topology.

iii) There exist $r_1$ and $r_2$ such that, for any $r, \ r_1\ge r \ge r' \ge r_2>0$, the restriction $C^*(r)\!\to\! C^*(r')$ is a quasi-isomorphism.

\medskip
Then, there exists a small neighborhood ${S}_m$ of 0 in ${S}$ (depending on $m\in \Z$) such that , for $q\ge m$, 
$\mathrm{H}^q(C^*(r))\big|_{{S}_m}$ is an $\O_{{S}_m}$-coherent module.
}
\vspace{-0.1cm}
\end{lem}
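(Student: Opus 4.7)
The plan is to follow the classical Cartan--Serre--Schwartz finiteness strategy, in the relative $\O_S$-linear form developed by Forster--Knorr precisely for this situation. The essential analytic ingredient is that for any $r'<r$ and any Stein $S'\subset S$ the restriction
\[
\Gamma(D(r)\times S',\,\O_{D(r)\times S'})\ \longrightarrow\ \Gamma(D(r')\times S',\,\O_{D(r')\times S'})
\]
is nuclear: expanding in Taylor series along the polycylinder variables and truncating in total degree $\le N$ exhibits the restriction as a norm-limit (in the Fréchet seminorms on the smaller polydisc) of $\Gamma(S',\O_{S})$-linear operators of finite rank. I would package this, via hypothesis (i), into a finite-rank approximation $T_N:C^q(r)\to C^q(r')$ of the genuine restriction $\rho:C^q(r)\to C^q(r')$, with $\rho-T_N$ of arbitrarily small operator norm, for each single $q$.

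Next, I would fix intermediate radii $r_2<r''<r'<r<r_1$ and exploit hypothesis (iii): the two restriction maps $C^*(r)\to C^*(r')\to C^*(r'')$ are both quasi-isomorphisms of complexes of Fréchet $\O_S$-modules. A relative version of Schwartz's perturbation lemma then applies: a finite-rank $\O_S$-linear perturbation of a surjective (resp.\ quasi-isomorphic) morphism of Fréchet $\O_S$-modules is, locally on $S$ near $0$, still surjective (resp.\ quasi-isomorphic) modulo a finitely generated $\O_S$-module, provided the perturbation is small enough in the Fréchet topology. Carrying this out degreewise and using that the complex is bounded from the left together with the cutoff $q\ge m$, I obtain a bounded complex $F^*$ of free $\O_{S_m}$-modules of finite rank together with a chain map $F^*\to C^*(r)|_{S_m}$ which is a quasi-isomorphism in degrees $\ge m$, on some neighborhood $S_m$ of $0$.

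The conclusion is then routine: the cohomology of a bounded complex of free finitely generated $\O_{S_m}$-modules is finitely presented, hence $\O_{S_m}$-coherent. Passing the isomorphism through in degrees $\ge m$ shows $\mathrm{H}^q(C^*(r))|_{S_m}$ is coherent for every $q\ge m$, which is the claim.

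The main obstacle is the second step: making the Schwartz-style perturbation argument precise in the relative $\O_S$-linear Fréchet category, and in particular controlling how small $S_m$ must be chosen. The closeness required of $T_N$ to $\rho$ in order that the perturbed map remain a quasi-isomorphism degrades as the cohomological degree grows, which is the analytic reason why the statement restricts to $q\ge m$ and allows $S_m$ to shrink with $m$. In practice this is handled by a double induction on the degree and on the Taylor truncation order, with uniform estimates for Taylor remainders on nested polycylinders; this is where almost all the technical work of the Forster--Knorr paper is concentrated, and where I expect any independent proof to have to do the hardest estimates as well.
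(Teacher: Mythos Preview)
The paper does not prove this lemma at all: it is quoted as a black-box result due to Forster and Knorr, with a parenthetical remark that the precise formulation is taken from an unpublished note of theirs and can be deduced from the published work \cite{Forster-Knorr}, \cite{Knorr}. So there is no ``paper's own proof'' to compare against.

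That said, your sketch is a reasonable outline of the actual Forster--Knorr argument. The strategy you describe --- nuclearity of the restriction maps $\Gamma(D(r)\times S',\O)\to\Gamma(D(r')\times S',\O)$ for $r'<r$, finite-rank Taylor truncations approximating the restriction, a relative Schwartz-type perturbation lemma for Fr\'echet $\O_S$-modules, and a descending induction on degree producing a bounded complex of finite free $\O_{S_m}$-modules quasi-isomorphic to $C^*(r)|_{S_m}$ in degrees $\ge m$ --- is indeed the shape of the Forster--Knorr proof of the Grauert direct image theorem and its variants. Your honest caveat that the real work lies in making the perturbation step precise in the relative Fr\'echet category, and that this is where the dependence of $S_m$ on $m$ enters, is also accurate. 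As a proof proposal for a lemma the paper merely cites, this is appropriate; but you should be aware that what you have written is a roadmap rather than a proof, and that filling in the Schwartz perturbation step rigorously (in particular the $\O_S$-linearity and the uniformity in $S'$) is substantial.
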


We apply {\it Lemma} \ref{Forster-Knorr} to the total complex of complex $(TR^{p,\tilde *},d_{DR},\tilde\partial)$. 

Let us check that the complex satisfies the assumptions in the Forster-Knorr  Lemma
by putting $S=S^*$ (and run $S'$ over all Stein open subset of $S^*$ in order to make $\O_{S^*}$-module strcture), $0$ to be $t\in S^*$ and $r_1=1,\ r_2=r^*$. 

i) The condition i) is satisfied due to the description in {\bf D), Case} 
$(TR^{p,\tilde *},d_{DR},\tilde\partial)$.

ii) The condition ii) is verified as follows. The coboundary operator here is a mixture of $\partial_K, \Check{\delta}$ and $d_{DR}$, all of them are obviously $\O_S$-homomorphisms. That they are continuous w.r.t. the Fr\'echet topology can be seen as follows.

It is well known that the holomorphic function ring (Stein algebra) $\Gamma(D_K(r)\times {S}',\O_{D_K(1)\times {S}^*})$  carries naturally a Fr\'echet topology (\cite{Cartan}, \cite{G-P-R} p266) with respect to the compact open convergence. 
Then, the operators $\Check{\delta}$ and $\partial_\K$ are $\O_{D_K(1)\times {S}^*}$-homomorphisms and induce continuous morphisms on the modules. The  operator $d_{DR}$ is no longer an $\O_{D_K(1)\times {S}^*}$-homomorphism but is only an $\O_{{S}^*}$-homomorphism. Nevertheless, it is also well known that differentiation operators on a Stein algebra are also continuous w.r.t.~ the Fr\'echet topology.

iii) The quasi-isomorphisms between the complexes for $r$ and $r'$ with $r*\le r, r'\le 1$ was shown in the last step of {\bf D)}.

Finally, choosing $m=-1$, we obtain the coherence of the direct image sheaf of the total complex of $(TR^{p,\tilde *},d_{DR},\tilde\partial)$ in a neighborhood of $t\in S^*$.  
Then, we return to the long exact sequence studied in {\bf D)}. Two terms Case 2. and 3. of them (as a triangle) are $\O_S$-coherent near at $t\in S$. Therefore, the third term Case 3., the direct image of the double  complex, that is, {\it the hyper-cohomology groups $\R\Phi_*\!(\Omega_{Z/{S}}^\bullet,d_{Z/{S}})$ is also $\O_S$-coherent in a neighborhood of $t\in S$.}

This completes the proof  of the {\bf Main Theorem} given in Introduction.  \qed

\end{document}